\newcommand{\hide}[1]{}
\newtheorem{theorem}{Theorem}[section]
\newlength\tindent
\newcommand{\VI}{\mbox{VI}}
\newcommand{\RR}{\mathbb R}
\newcommand{\XX}{{\cal X}}
\newcommand{\T}{{\footnotesize\mbox{th}}}
\newcommand{\half}{0.5}
\title{ An Approximation-Based Regularized Extra-Gradient Method for Monotone Variational Inequalities }
\author{
Kevin Huang\thanks{Department of Industrial and System Engineering, University of Minnesota, huan1741@umn.edu}
\hspace{1cm}
Shuzhong Zhang\thanks{Department of Industrial and System Engineering, University of Minnesota, zhangs@umn.edu}
}
\date{\today}
\begin{document}

\maketitle

\begin{abstract}
    In this paper, we propose a general extra-gradient scheme for solving monotone variational inequalities (VI), referred to here as {\it Approximation-based Regularized Extra-gradient method} (ARE). The first step of ARE solves a VI subproblem with an approximation operator satisfying a $p^\T$-order Lipschitz bound with respect to the original mapping, further coupled with the gradient of a $(p+1)^{\T}$-order regularization. 
    The optimal global convergence is guaranteed by including an additional extra-gradient step, while a $p^{\T}$-order superlinear local convergence is 
    shown to hold if the VI is strongly monotone. The proposed ARE is inclusive and general, in the sense that 
    a variety of solution methods can be formulated within this framework as different manifestations of approximations, and their iteration complexities would follow through 
    in a unified fashion. The ARE framework relates to the first-order methods, while opening up possibilities to developing higher-order methods specifically for structured problems that guarantee the optimal iteration complexity bounds.
    
     
    \vspace{3mm}

    \noindent\textbf{Keywords:} variational inequality, extra-gradient method, tensor method, composite operators. 
\end{abstract}

\section{Introduction}
\label{sec:intro}
Let $\XX \subset \RR^n$ be a convex set; $F(x): \RR^n \mapsto \RR^n$ be a vector mapping. The following problem is known as the {\it variational inequality problem}\/ (VI):
\[
\mbox{ Find $x^* \in \XX$ such that $F(x^*)^\top (x-x^*) \ge 0$ for all $x\in \XX$}.
\]
As a notation we denote the solution set as
\[
\VI_{\XX}(F(x)):=\{ x^* \mid \mbox{$x^* \in \XX$ such that $F(x^*)^\top (x-x^*) \ge 0$ for all $x\in \XX$} \}.
\]
We assume $\VI_{\XX}(F(x))$ is non-empty throughout this paper. The study of finite-dimensional VI problems dates back to 1960's where the complementarity problem was developed to solve for various equilibria, such as economic equilibrium, traffic equilibrium, and in general Nash equilibrium. For a comprehensive study of the applications, theories and algorithms of VI, readers are referred to the celebrated monograph by Facchinei and Pang \cite{facchinei2007finite}.

In this paper, we are interested in a specific class of VI, where the operator $F$ is {\it monotone}:
\begin{eqnarray}
\langle F(x)-F(y),x-y\rangle\ge\mu\|x-y\|^2,\quad\forall x,y\in\mathcal{X}\label{strong-monotone}
\end{eqnarray}
for some $\mu\ge0$. If there exists some $\mu>0$ such that \eqref{strong-monotone} holds, it is referred to {\it strongly monotone} and $\VI_{\XX}(F(x))$ is a singleton. The earliest methods developed to solve VI of this type are the projection method due to Sibony \cite{sibony1970methodes}:
\begin{eqnarray}
x^{k+1}:=\arg\min\limits_{x\in\mathcal{X}}\quad\langle F(x^k),x-x^k\rangle+\frac{\gamma_k}{2}\|x-x^k\|^2,\label{projection-method}
\end{eqnarray}
and the proximal point method due to Martinet \cite{martinet1970breve}:
\begin{eqnarray}
x^{k+1}\in \VI_{\XX}(F(x)+\gamma_k(x-x^k)),\label{proximal-point-update}
\end{eqnarray}
for positive $\{\gamma_k\}_{k\ge0}$. 
These two methods form the basis of most, if not all, methods developed for monotone VI in the research community thus far.

Korpelevich \cite{korpelevich1976extragradient} first introduced an {\it extra-step}\/ in the update as follows:
\begin{eqnarray}
\left\{
\begin{array}{lcl}
     x^{k+0.5}&:=&\arg\min\limits_{x\in\mathcal{X}}\quad\langle F(x^k),x-x^k\rangle+\frac{\gamma_k}{2}\|x-x^k\|^2, \\
     &&\\
     x^{k+1}&:=&\arg\min\limits_{x\in\mathcal{X}}\quad\langle F(x^{k+0.5}),x-x^k\rangle+\frac{\gamma_k}{2}\|x-x^k\|^2.
\end{array}
\right.\label{extra-gradient-update}
\end{eqnarray}
The iteration complexity of the extra-gradient method \eqref{extra-gradient-update} is later established by Tseng \cite{tseng1995linear}. In particular, if the operator is strongly monotone ($\mu>0$), it is $\mathcal{O}\left(\kappa\ln\left(\frac{1}{\epsilon}\right)\right)$ for an $\epsilon$-solution, where $\kappa=\frac{L}{\mu}$ is the condition number for Lipschitz continuous operator with constant $L$. This is a significant improvement over $\mathcal{O}\left(\kappa^2\ln\left(\frac{1}{\epsilon}\right)\right)$ of the vanilla projection method \eqref{projection-method}, and it is in fact optimal among first-order methods (i.e.\ using only the information of $F(\cdot)$) applied to such class of problems (with lower bound recently established by Zhang {\it et al.} \cite{junyu2019}). Many algorithms developed for monotone VI thereafter adopt this concept of extra-step update and can be considered as variants of the extra-gradient method, such as modified forward-backward method \cite{tseng2000modified}, mirror-prox method \cite{nemirovski2004prox}, dual-extrapolation method \cite{nesterov2007dual, nesterov2006solving}, hybrid proximal extra-gradient method \cite{monteiro2010complexity}, extra-point method \cite{huang2021unifying}.

To facilitate the discussion, let us first introduce a few terminologies that will be used throughout the paper. The term ``$p^{\T}$-order method'' will be used following the convention of optimization. In particular, by considering $F(x)=\nabla f(x)$ specifically as a gradient mapping of some function $f(x)$, the first-order method in VI refers to using only the information from the operator $F(\cdot)$, and the $p^{\T}$-order method refers to using the $(p-1)^{th}$-order derivative of the operator: $\nabla^{p-1}F$. As a result, the term ``gradient'' will also be used to refer to $F(\cdot)$ due to the background 
of VI in solving saddle-point and optimization models. In the $p^{\T}$-order method, the Lipschitz continuity of $\nabla^{p-1}F(x)$ is assumed with constant $L_p$:
\begin{eqnarray}
\|\nabla^{p-1}F(x)-\nabla^{p-1}F(y)\|\le L_p\|x-y\|.\label{p-lipschitz}
\end{eqnarray}

In this paper, the proposed {\it Approximation-based Regularized Extra-gradient method} (ARE) can be viewed as a generalization of the extra-gradient method \eqref{extra-gradient-update} 
in some sense. 
While the intermediate iterate $x^{k+0.5}$ in extra-gradient method \eqref{extra-gradient-update} is updated by a gradient projection step, in ARE it is replaced by solving a VI subproblem:
\begin{eqnarray}
x^{k+0.5}:=\VI_{\XX}(\tilde{F}(x;x^k)+\gamma\|x-x^k\|^{p-1}(x-x^k)),\label{p+1-regularize-VI-sub}
\end{eqnarray}
where $\tilde{F}(x;x^k)$ is an approximation mapping at $x^k$ that satisfies a $p^\T$-order Lipschitz bound with respect to $F(x)$ (will be formally defined later), and $\|x-x^k\|^{p-1}(x-x^k)$ is the gradient mapping of a $(p+1)^{\T}$-order regularization. Therefore, we refer to the update in \ref{p+1-regularize-VI-sub} as {\it $(p+1)^\T$-order regularzied VI subproblem}. A common choice of $\tilde{F}(x;x^k)$ is the {\it Taylor approximation} of $F(x)$ at $x^k$, namely,
\[
\tilde{F}(x;x^k):=\sum\limits_{i=0}^{p-1}\frac{1}{i!}\nabla^{i}F(x^k)[x-x^k]^i.
\]
Such choice of $\tilde{F}(x;x^k)$ not only recovers the
extra-gradient method when $p=1$, but also gives a succinct update principle for higher-order methods when $p>1$. However, the Taylor approximation needs not be the only motivation for the 
ARE. We show that the key underlying condition is the aforementioned $p^\T$-order Lipschitz bound, therefore any approximation satisfying this condition can be considered as a valid method under the general framework of ARE. This not only generalizes the existing methods but also opens up the possibilities of developing different methods from those in the literature, and we will discuss several such specific schemes in Section \eqref{sec:structured-are}.
By applying the abstraction of ``approximation'' in ARE, a unifying and concise analysis is available to establish the iteration complexity bound that can be readily specified to any concrete approximation in various methods given the different problem structures at hand.

The rest of the paper is organized as follows. Section \ref{sec:literature} reviews relevant first-order and higher-order methods for solving monotone VI. Section \ref{sec:ARE-global} formally presents ARE and analyzes the global convergence for both monotone and strongly monotone cases. Section \ref{sec:ARE-local} continues the discussion with strongly monotone VI and establishes the local superlinear convergence. A modified ARE algorithm is in place to guarantee both global linear and local superlinear convergence. Section~\ref{sec:subproblem} is devoted to the discussion of solving the VI subproblem with the approximation operator under special cases. In Section~\ref{sec:structured-are}, we present several structured ARE schemes given the original operator is of the composite form $F(x)=H(x)+G(x)$ and discuss their connections to the existing methods. We further discuss two specialized approximation concepts: the outer approximation and the inner approximation, given the general composite form $F(x)=H(G(x))$.  Numerical results from preliminary experiments are demonstrated in Section~\ref{sec:numerical}, and we conclude the paper in Section~\ref{sec:conclusion}.

\section{Literature Review}
\label{sec:literature}
Historically, 
Martinet \cite{martinet1970breve} first introduced the notion of proximal point method, which was later studied and popularized by Rochafellar \cite{rockafellar1976monotone}.  
Tseng in \cite{tseng1995linear} studied the linear convergence of proximal point method, extra-gradient method, and matrix-splitting method, given a certain error bound is satisfied. The strongly monotone operator can be an immediate example of such case. 
As a matter of fact, subsequently developed methods such as modified forward-backward method \cite{tseng2000modified}, mirror-prox method \cite{nemirovski2004prox}, dual-extrapolation method \cite{nesterov2007dual, nesterov2006solving}, hybrid proximal extra-gradient (HPE) method \cite{monteiro2010complexity}, extra-point method \cite{huang2021unifying} 
are all based on the concept of extra-gradient.

Another type of method, known as {\it optimistic gradient descent ascent method} (OGDA), was first proposed by Popov \cite{popov1980modification}:
\begin{eqnarray}
x^{k+1}:=P_{\mathcal{X}}\left(x^k-\alpha F(x^k)-\eta(F(x^k)-F(x^{k-1}))\right),\label{ogda-update}
\end{eqnarray}
for some positive $\alpha,\eta>0$, where $P_{\mathcal{X}}$ denotes the projection operator onto $\mathcal{X}$. Unlike the update in extra-gradient method \eqref{extra-gradient-update} which uses an extra step, OGDA only requires one update (one projection) per iteration and uses the information from the previous iterate $x^{k-1}$ instead. The optimal convergence of OGDA, in both monotone and strongly monotone VI, is established by Mokhtari {\it et al}.\ \cite{mokhtari2019unified, mokhtari2020convergence}. {The extra-point method proposed by Huang and Zhang \cite{huang2021unifying} extends 
the concepts of the extra-gradient method, OGDA, Nesterov's acceleration in optimization \cite{nesterov1983method}, and the ``heavy-ball'' method by Polyak \cite{polyak1964some} and combines them in a unifying update scheme.} If the parameters associated to these different components satisfy a certain constraint set, it is shown that optimal iteration complexity is guaranteed. There is another line of work that studies variants of extra-gradient type methods \cite{yoon2021accelerated, lee2021fast, kim2021accelerated} and proximal point methods \cite{tran2021halpern, lieder2021convergence, park2022exact} with the {\it anchoring} update, where in each iteration the initial iterate is used as the component of convex combination. The iterates produced are shown to converge among these different methods \cite{yoon2022acceleratedFlock}, at a rate same as the optimal convergence rate (to the solution), and the iteration complexities are improved by constant orders compared to vanilla extra-gradient method.

The above methods are known as the {\it first-order}\/ methods. The lower bound of 
the iteration complexity for the 
first-order methods applied to monotone VI is $\Omega\left(\frac{1}{\epsilon}\right)$, as established by Nemirovsky and Yudin \cite{nemirovsky1983problem}, while for strongly monotone VI, it 
is $\Omega\left(\kappa\ln\left(\frac{1}{\epsilon}\right)\right)$, shown by Zhang {\it et al}.\ \cite{junyu2019} in the context of strongly-convex-strongly-concave saddle-point problems. Methods such as extra-gradient method, mirror-prox method, dual-extrapolation method \cite{nesterov2007dual, nesterov2006solving}, HPE, OGDA, extra-point method have been proven to achieve these lower bounds, hence optimal.

The work of Taji {\it et al.} \cite{taji1993globally} is among the first to consider second-order methods for solving VI. A linearized VI subproblem with operator $F(x^k)+\nabla F(x^k)(x-x^k)$ is solved in each iteration and the merit function $f(x)=\max\limits_{x'\in\mathcal{X}}\langle F(x),x-x'\rangle-\frac{\mu}{2}\|x-x'\|^2$ is used to prove the global convergence, with an additional local quadratic convergence. However, no explicit iteration complexity is established for second-order methods until recently. Following the line of research in \cite{taji1993globally}, Huang and Zhang \cite{huang2022cubic} specifically consider unconstrained strongly-convex-strongly-concave saddle point problem and incorporate the idea of cubic regularization (originally proposed by Nesterov in the context of optimization \cite{nesterov2006cubic}), proving the global iteration complexity $\mathcal{O}\left(\left(\kappa^2+\frac{\kappa L_2}{\mu}\right)\ln\left(\frac{1}{\epsilon}\right)\right)$, where $L_2$ is the Lipschitz constant of the Hessian information, in addition to the local quadratic convergence. 

Another line of research on second-order methods was started by Monteiro and Svaiter \cite{monteiro2012iteration}. They propose a Newton Proximal Extragradient (NPE) method, which can be viewed as a special case of the HPE with large step size. In HPE, the first step solves approximately the proximal point update \eqref{proximal-point-update} (denote as $x^{k+0.5}$), while the second step is a regular extra-gradient step. The ``large step size'' condition, which is key to guarantee a superior convergence rate, requires:
\begin{eqnarray}
\frac{1}{\gamma_k}\ge\frac{\theta}{\|x^{k+0.5}-x^k\|}\label{large-step}
\end{eqnarray}
for some constant $\theta>0$. Note that since $x^{k+0.5}$ depends on $\gamma_k$, a certain procedure is required to determine $x^{k+0.5}$ and $\gamma_k$ such that \eqref{large-step} also holds. By observing that the set of $\gamma_k$ satisfying the condition is in fact a closed interval, they develop a bisection method to iteratively reduce the range of $\gamma_k$ and solve for $x^{k+0.5}$ for each fixed $\gamma_k$ until the condition is satisfied. They show that for monotone VI, NPE admits $\mathcal{O}\left(1/\epsilon^{\frac{2}{3}}\right)$ iteration complexity for {\it ergodic mean} of $x^{k+0.5}$ over $0\le k\le N-1$, which is an improvement over the optimal first-order complexity $\mathcal{O}\left(\frac{1}{\epsilon}\right)$. While NPE can also be expressed in the form of second-order mirror-prox method,
Bullins and Lai \cite{bullins2020higher} propose a ``higher-order mirror-prox method'', extending the second-order mirror-prox method to $p^\T$-order and establish $\mathcal{O}\left(1/\epsilon^{\frac{2}{p+1}}\right)$ iteration complexity. They replace the linearization $F(x^k)+\nabla F(x^k)(x-x^k)$ with the Taylor approximation of $F(x^k)$, $\sum\limits_{i=0}^{p-1}\frac{1}{i!}\nabla^{i}F(x^k)[x-x^k]^i$, together with an higher-order constraint on $\gamma_k$ and $x^{k+0.5}$ similar to \eqref{large-step}.
They also demonstrate an explicit procedure to instantiate the proposed method in unconstrained problem with $p=2$ and a bisection method to search for $x^{k+0.5}$ and $\gamma_k$. In \cite{ostroukhov2020tensor}, Ostroukhov {\it et al}.\ further extend the higher-order mirror-prox method to strongly monotone VI by incorporating the {\it restart procedure}, which yields global iteration complexity $\mathcal{O}\left(\left(\frac{L_p}{\mu}\right)^{\frac{2}{p+1}}\ln\left(\frac{1}{\epsilon}\right)\right)$. The local quadratic convergence is then guaranteed by incorporating CRN-SPP proposed in \cite{huang2022cubic}. Nesterov in \cite{nesterov2006cubicVI} proposes solving constrained convex optimization with cubic regularized Newton method and extends the results to monotone VI with cubic regularized Newton modification of the dual-extrapolation method \cite{nesterov2007dual}. The global iteration complexity is shown to be $\mathcal{O}(\frac{1}{\epsilon})$ for monotone VI, with local quadratic convergence for strongly monotone VI.

Recently, there are new developments of higher-order methods for VI that are closely related to the work in this paper. Jiang and Mokhtari \cite{jiang2022generalized} propose the Generalized Optimistic Method, which is a general $p^\T$-order variant of OGDA. Instead of using $F(x^k)$ to approximate the proximal point update direction $F(x^{k+1})$ with correction $F(x^k)-F(x^{k-1})$ as in OGDA \eqref{ogda-update}, they propose to use a general approximation $P(x^{k+1};\mathcal{I}_{k})$ with correction $F(x^k)-P(x^k;\mathcal{I}_{k-1})$, where $P(x;\mathcal{I}_k)$ can contain $p^\T$-order information and $\mathcal{I}_k$ is the information up to $k^\T$ iteration. Unlike ARE, the Generalized Optimistic Method does not require an additional projection step in the update, nor does it require restart to establish linear convergence for strongly monotone VI (see \cite{ostroukhov2020tensor} and the discussion in Section \ref{sec:ARE-global-strong}). However, it still requires to incorporate a bisection subroutine to solve the higher-order subproblem, similar to the higher-order mirror-prox method \cite{bullins2020higher}, while ARE does not. Adil {\it et al}.\ \cite{adil2022optimal} propose a $p^\T$-order method that improves upon the higher-order mirror-prox method in \cite{bullins2020higher}. The improvement comes from incorporating the gradient of $(p+1)^\T$-order regularization in the higher-order VI subproblem, which makes the bisection subroutine unnecessary, and the global complexity is improved by a logarithmic factor. The special case of ARE, where the Taylor approximation is used as the approximation operator $\tilde{F}(x;x^k)$ in the $(p+1)^\T$-order regularized VI subproblem \eqref{p+1-regularize-VI-sub}, coincides with the method proposed in \cite{adil2022optimal} for solving monotone VI. In this paper, we further develop the global and local convergence for strongly monotone VI (see Section \ref{sec:ARE-global-strong} and \ref{sec:ARE-local}) and discuss the possibilities beyond Taylor approximation in the subproblem, which is one of the key underlying motivation behind the ARE framework. Lin and Jordan \cite{lin2022perseus} propose a $p^\T$-order generalization of Nesterov's dual extrapolation method \cite{nesterov2007dual}, referred to Perseus. Same as ARE and \cite{adil2022optimal}, Perseus does not require bisection subroutines by solving the VI subproblem with higher-order regularization. In addition to developing the iteration complexity guarantee in monotone and strongly monotone VI, \cite{lin2022perseus} also extends the analysis to non-monotone VI that satisfies the (strong) Minty condition. Furthermore, they establish the lower bound complexity for general $p^\T$-order method applied to monotone VI, given by $\Omega\left(1/\epsilon^{\frac{2}{p+1}}\right)$, which is achieved by Perseus, the Generalized Optimistic Method \cite{jiang2022generalized}, \cite{adil2022optimal}, and ARE in this paper. Therefore, they are all optimal $p^\T$-order methods for monotone VI.

The above mentioned higher-order methods share some common aspects with ARE, among which the underlying ideas of ARE are more closely related to NPE \cite{monteiro2012iteration} and higher-order mirror-prox method \cite{bullins2020higher, adil2022optimal}, as we shall formally present in Section \ref{sec:ARE-global}. However, the contributions of this paper are also distinguished from the previous work in the following perspectives. Firstly, by identifying the key condition required in developing the higher-order methods, we are able to replace the commonly used Taylor approximation with a more general approximation in the subproblem. This not only includes existing methods under the framework of ARE, but also leads to developing specific ARE schemes for VI with special structures. We devote Section \ref{sec:structured-are} to a more in-depth discussion on this perspective.
Secondly, we also identify that by solving a $(p+1)^\T$-order regularized VI subproblem as the first step in each iteration, the procedure of the algorithm as well as the corresponding analysis are largely simplified compared to the previous work in \cite{monteiro2012iteration, bullins2020higher}. We also discuss in more details the procedure for solving such regularized VI subproblem in the practical case where $p=2$ in Section \ref{sec:subproblem}. Finally, a $p^\T$-order local superlinear convergence is established for $p^\T$-order ARE for strongly monotone VI, which is new to the literature compared to the existing $\left(\frac{p+1}{2}\right)^\T$-order superlinear convergence established in \cite{jiang2022generalized, lin2022perseus}.

\section{The Global Convergence Analysis of ARE}
\label{sec:ARE-global}
The Approximation-based Regularized Extra-gradient method (ARE) aims to solve the VI problem:
\begin{eqnarray}
\VI_{\XX}(F(x)):=\{ x^* \mid \mbox{$x^* \in \XX$ such that $F(x^*)^\top (x-x^*) \ge 0$ for all $x\in \XX$} \}.\label{VI-prob}
\end{eqnarray}
We assume that $F(x)$ is monotone \eqref{strong-monotone} and $\VI_{\XX}(F(x))$ is non-empty. When $F(x)$ is assumed to be strongly monotone, $\VI_{\XX}(F(x))$ becomes a singleton. We also assume the $p^\T$-order Lipschitz continuity \eqref{p-lipschitz}.

Now, given an arbitrary $y\in\mathcal{X}$, we are interested in a general approximation mapping at $y$: $\tilde{F}(\cdot;y):\RR^n\mapsto\RR^n$, such that the following $p^\T$-order Lipschitz bound holds between the original mapping $F(x)$ and the approximation $\tilde F(x;y)$:
\begin{eqnarray}
\|\Tilde{F}(x;y)-F(x)\|\le \tau L_p\|x-y\|^p,\label{approx-oper-bound}
\end{eqnarray}
for some $p>1$ and $\tau\in(0,1]$. The examples of such approximation include but not limited to the general Taylor approximation, which further includes $\Tilde{F}(x;y)=F(y)$ for $p=1$ and $\Tilde{F}(x;y)=F(y)+\nabla F(y)(x-y)$ for $p=2$ as special cases. In general, we say the proposed ARE is a ``$p^\T$-order method'' if the Lipschitz bound \eqref{approx-oper-bound} holds with $p$.

Based on the approximation mapping $\tilde{F}(x;y)$, let us consider the {\it regularized} approximation mapping by adding a gradient mapping of the $(p+1)^\T$-order regularization term, expressed in the following form:
\begin{eqnarray}
\tilde{F}(x;y)+L_p\|x-y\|^{p-1}(x-y).\label{regularized-approx-map}
\end{eqnarray}
Since the Jacobian of $L_p\|x-y\|^{p-1}(x-y)$ is positive definite for $x\neq y$, the mapping is monotone \cite{facchinei2007finite}. Therefore, the regularized approximation mapping \eqref{regularized-approx-map} is also monotone as long as $\tilde{F}(x;y)$ is. In ARE, the first step in each iteration is solving a VI subproblem with operator \eqref{regularized-approx-map}, i.e.\ a $(p+1)^\T$-order regualarized VI subproblem, followed by an extra-gradient step, summarized as follows:
\begin{eqnarray}
\left\{
\begin{array}{ccl}
x^{k+0.5} &:=& \VI_{\XX} \left(  \tilde F(x;x^k) + L_p \|x-x^k\|^{p-1} (x-x^k) \right) , \\
x^{k+1} &:=& \mbox{arg}\min\limits_{x \in \XX} \, \langle F(x^{k+0.5}) , x - x^k \rangle + \frac{L_p\|x^{k+0.5}-x^k\|^{p-1}}{2} \|x-x^k\|^2,
\end{array}
\right.\label{are-update}
\end{eqnarray}
for $k=1,2,...$. {In the above, as a matter of notion we indicate $x^{k+0.5}$ to be any solution taken from the solution set $\VI_\XX\left(\cdot\right)$.}
In the second step, the extra-gradient step involves a varying step size $\frac{1}{\gamma_k}$, where
\[
\gamma_k:=L_p\|x^{k+0.5}-x^k\|^{p-1},\quad k\ge1
\]
is a parameter depending on the previous update $x^{k+0.5}$. This together with the bound \eqref{approx-oper-bound} form the basis of the optimal iteration complexity bound for ARE.

\subsection{Solving monotone VI with ARE}

We first establish the global convergence results for solving a general monotone VI \eqref{VI-prob} with $p^\T$-order ARE in the next theorem.
\bigskip
\begin{theorem}[Global convergence of ARE: Monotone VI]
\label{thm:are-global}
Let $\{x^k\}_{k\ge1}$ and $\{x^{k+0.5}\}_{k\ge1}$ be generated by \eqref{are-update} and suppose $F(\cdot)$ is monotone and $\tilde{F}(x;x^k)$ is such that \eqref{approx-oper-bound} holds. Then
\begin{eqnarray}
m(\Bar{x}_N):=\max\limits_{x\in\mathcal{X}}\,\langle F(x),\Bar{x}_N-x\rangle\le\frac{D^2}{2\Gamma_N}=\mathcal{O}(N^{-\frac{p+1}{2}}),\label{conv-monotone}
\end{eqnarray}
where
\begin{eqnarray}
\Bar{x}_N:=\frac{\sum\limits_{k=1}^N\frac{x^{k+0.5}}{\gamma_k}}{\Gamma_N},\quad \Gamma_N:=\sum\limits_{k=1}^N\gamma_k^{-1}\nonumber
\end{eqnarray}
for some $N>0$, and $D:=\max\limits_{x,x'\in\mathcal{X}}\|x-x'\|$.
\end{theorem}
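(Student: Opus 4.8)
The plan is to run the standard extra-gradient / mirror-prox energy argument, adapted to the fact that the first step of \eqref{are-update} is a full VI subproblem rather than a plain gradient step, with the regularization constant $L_p$ tuned precisely against the approximation error $\tau L_p$ in \eqref{approx-oper-bound}. First I would record the first-order (variational) optimality conditions of the two subproblems in \eqref{are-update}. The second, prox-type step gives, for every $x\in\XX$, $\langle F(x^{k+0.5})+\gamma_k(x^{k+1}-x^k),\,x-x^{k+1}\rangle\ge 0$, while the first step, after substituting $\gamma_k=L_p\|x^{k+0.5}-x^k\|^{p-1}$ and testing at $x=x^{k+1}$, gives $\langle \tilde F(x^{k+0.5};x^k)+\gamma_k(x^{k+0.5}-x^k),\,x^{k+1}-x^{k+0.5}\rangle\ge 0$.

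Next I would split $\langle F(x^{k+0.5}),x^{k+0.5}-x\rangle=\langle F(x^{k+0.5}),x^{k+1}-x\rangle+\langle F(x^{k+0.5}),x^{k+0.5}-x^{k+1}\rangle$. The first piece is controlled by the prox optimality together with the three-point identity $\langle x^{k+1}-x^k,x-x^{k+1}\rangle=\tfrac12\|x-x^k\|^2-\tfrac12\|x-x^{k+1}\|^2-\tfrac12\|x^{k+1}-x^k\|^2$. For the second piece I would replace $F$ by $\tilde F$ and absorb the error via Cauchy--Schwarz and \eqref{approx-oper-bound}: since $\tau L_p\|x^{k+0.5}-x^k\|^p=\tau\gamma_k\|x^{k+0.5}-x^k\|$, the mismatch is at most $\tau\gamma_k\|x^{k+0.5}-x^k\|\,\|x^{k+0.5}-x^{k+1}\|$, while the $\tilde F$-part is bounded by the VI optimality above, producing $\gamma_k\langle x^{k+0.5}-x^k,x^{k+1}-x^{k+0.5}\rangle$.

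The crux is then a completion of squares. Writing $u_k=\|x^{k+0.5}-x^k\|$ and $v_k=\|x^{k+1}-x^{k+0.5}\|$, all leftover quadratic terms collapse to $-\tfrac{\gamma_k}{2}\big(u_k^2-2\tau u_k v_k+v_k^2\big)$, which is $\le 0$ precisely because $\tau\le 1$. Dividing by $\gamma_k$ yields the clean one-step estimate $\tfrac{1}{\gamma_k}\langle F(x^{k+0.5}),x^{k+0.5}-x\rangle\le \tfrac12\|x-x^k\|^2-\tfrac12\|x-x^{k+1}\|^2$. Invoking monotonicity \eqref{strong-monotone} (with $\mu\ge 0$) to pass from $F(x^{k+0.5})$ to $F(x)$, telescoping over $k=1,\dots,N$, and recognizing the weighted iterate $\bar x_N$ and the normalizer $\Gamma_N$, I obtain $\langle F(x),\bar x_N-x\rangle\le D^2/(2\Gamma_N)$ for all $x$; taking the maximum over $x\in\XX$ gives the middle bound in \eqref{conv-monotone}.

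Finally, to turn $D^2/(2\Gamma_N)$ into the rate $\mathcal{O}(N^{-(p+1)/2})$ I must lower-bound $\Gamma_N=\sum_k\gamma_k^{-1}=L_p^{-1}\sum_k u_k^{-(p-1)}$. Here I would retain the negative square term discarded above: testing the one-step inequality at a solution $x^*$, where the left side is nonnegative by monotonicity and the VI property, and telescoping shows $\sum_{k=1}^N u_k^2$ is bounded by a constant independent of $N$. A H\"older inequality, set up by splitting $1=u_k^{c}u_k^{-c}$ with $c=\tfrac{2(p-1)}{p+1}$, then yields $\sum_k u_k^{-(p-1)}\ge N^{(p+1)/2}\big/\big(\sum_k u_k^2\big)^{(p-1)/2}$, whence $\Gamma_N=\Omega(N^{(p+1)/2})$. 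I expect this last step to be the main obstacle: the bound on $\sum_k u_k^2$ leans on the strict slack in the completed square, so the argument is cleanest for $\tau<1$ (the coefficient $1-\tau^2$ of $u_k^2$ in $u_k^2-2\tau u_k v_k+v_k^2$ degenerates as $\tau\uparrow 1$), and the H\"older bookkeeping needed to produce exactly the exponent $(p+1)/2$ must be arranged with care.
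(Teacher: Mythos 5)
Your proposal is correct and follows essentially the same route as the paper's proof: the same two optimality conditions, the same decomposition with the approximation error absorbed against the regularization (your completed square $u_k^2-2\tau u_kv_k+v_k^2$ is the paper's Young-inequality step in disguise, both yielding the retained $(1-\tau^2)u_k^2$ slack), the same telescoping at $x^*$ to bound $\sum_k u_k^2$, and your H\"older step with exponent $c=\tfrac{2(p-1)}{p+1}$ is exactly the paper's power-mean inequality $M_{-2/(p-1)}(a)\le M_1(a)$, giving the identical lower bound $\Gamma_N=\Omega(N^{(p+1)/2})$. The $\tau<1$ caveat you flag is shared by the paper, whose final constant also carries $(1-\tau^2)^{-(p-1)/2}$.
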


\begin{proof}
Since
\[
x^{k+\half} = \VI_{\XX} \left(  \tilde F(x;x^k) + L_p \|x-x^k\|^{p-1} (x-x^k) \right),
\]
we have
\begin{equation} \label{subVI}
\langle \tilde F(x^{k+\half};x^k) + L_p \|x^{k+\half}-x^k\|^{p-1} (x^{k+\half}-x^k) , x - x^{k+\half}\rangle \ge 0,\,\, \forall x \in \XX.
\end{equation}
Denote $\gamma_k=L_p \|x^{k+\half}-x^k\|^{p-1}$.
Substituting $x=x^{k+1}$ in \eqref{subVI} we have
\begin{eqnarray}
&&\langle \tilde F(x^{k+\half};x^k)  ,  x^{k+1} - x^{k+\half}  \rangle \nonumber\\
&\ge& \gamma_k \langle x^{k+\half} - x^k , x^{k+\half} - x^{k+1}  \rangle \nonumber \\
&=& \frac{\gamma_k}{2} \left( \|x^{k+\half} - x^k\|^2 + \|x^{k+1}-x^{k+\half}\|^2 - \| x^{k+1}-x^k\|^2 \right) . \label{tildeF}
\end{eqnarray}

On the other hand, by the optimality condition at $x^{k+1}$ we have
\[
\langle F(x^{k+\half})  + \gamma_k (x^{k+1} - x^k) , x - x^{k+1} \rangle \ge 0 ,\mbox{ for all $x\in \XX$}.
\]
Hence,
\begin{eqnarray}
\langle F(x^{k+\half})  , x - x^{k+1} \rangle &\ge& \gamma_k \langle x^{k+1} - x^k, x^{k+1} - x \rangle  \nonumber \\
&=& \frac{\gamma_k}{2} \left(  \| x^{k+1} - x\|^2 + \| x^{k+1} - x^k\|^2 - \| x^k - x\|^2 \right) ,\, \mbox{ for all $x\in \XX$}. \label{F ineq}
\end{eqnarray}

Continue with the above inequality, for any given $x\in \XX$ we have
\begin{eqnarray*}
& & \frac{\gamma_k}{2} \left(  \| x^{k+1} - x\|^2 + \| x^{k+1} - x^k\|^2 - \| x^k - x\|^2 \right) \\
&\overset{\eqref{F ineq}}{\le} & \langle F(x^{k+\half})  , x - x^{k+1} \rangle \\
&=&  \langle F(x^{k+\half})  , x - x^{k+\half} \rangle + \langle F(x^{k+\half})  , x^{k+\half} - x^{k+1} \rangle \\
&=& \langle F(x^{k+\half})  , x - x^{k+\half} \rangle + \langle F(x^{k+\half}) - \tilde F(x^{k+\half};x^k) , x^{k+\half} - x^{k+1} \rangle + \langle \tilde F(x^{k+\half};x^k)  , x^{k+\half} - x^{k+1} \rangle \\
&\le& \langle F(x^{k+\half})  , x - x^{k+\half} \rangle + \| F(x^{k+\half}) - \tilde F(x^{k+\half};x^k) \| \cdot \| x^{k+\half} - x^{k+1} \|
+ \langle \tilde F(x^{k+\half};x^k)  , x^{k+\half} - x^{k+1} \rangle \\
&\le & \langle F(x^{k+\half})  , x - x^{k+\half} \rangle + \frac{\| F(x^{k+\half}) - \tilde F(x^{k+\half};x^k)\|^2}{2\gamma_k}  + \frac{\gamma_k \|  x^{k+\half} - x^{k+1} \|^2}{2}\\
& & + \langle \tilde F(x^{k+\half};x^k)  , x^{k+\half} - x^{k+1} \rangle \\
&\overset{\eqref{approx-oper-bound}}{\le} & \langle F(x^{k+\half})  , x - x^{k+\half} \rangle + \frac{\tau^2 L_p^2 \| x^{k+\half} - x^k\|^{2p} }{2\gamma_k}  + \frac{\gamma_k \|  x^{k+\half} - x^{k+1} \|^2}{2}\\
& & + \langle \tilde F(x^{k+\half};x^k)  , x^{k+\half} - x^{k+1} \rangle .
\end{eqnarray*}
Noticing that $\frac{\tau^2 L_p^2 \| x^{k+\half} - x^k\|^{2p}}{2\gamma_k}=\frac{\tau^2 \gamma_k \| x^{k+\half} - x^{k} \|^2}{2}$, and further using \eqref{tildeF} we derive from the above that
\begin{eqnarray*}
& & \frac{\gamma_k}{2} \left(  \| x^{k+1} - x\|^2 + \| x^{k+1} - x^k\|^2 - \| x^k - x\|^2 \right) \\
&\le& \langle F(x^{k+\half})  , x - x^{k+\half} \rangle + \frac{\tau^2 \gamma_k \| x^{k+\half} - x^{k} \|^2}{2} + \frac{\gamma_k \|  x^{k+\half} - x^{k+1} \|^2}{2} \\
&   & + \frac{\gamma_k}{2} \left[ - \|x^{k+\half} - x^k\|^2 - \|x^{k+1}-x^{k+\half}\|^2 + \| x^{k+1}-x^k\|^2 \right] .
\end{eqnarray*}
Canceling out terms, we simplify the above inequality into
\begin{equation} \label{iter}
\langle F(x^{k+\half})  , x^{k+\half} - x \rangle + \frac{\gamma_k}{2} \left(1-\tau^2 \right) \| x^{k+\half} - x^{k} \|^2 \le
\frac{\gamma_k}{2} \left[ \|x^{k} - x\|^2 - \|x^{k+1}-x\|^2  \right] .
\end{equation}
Consequently, by the monotonicity of $F$, we have
\begin{eqnarray*}
& & \langle F(x)  , x^{k+\half} - x \rangle + \frac{\gamma_k}{2} \left(1-\tau^2 \right) \| x^{k+\half} - x^{k} \|^2 \nonumber \\
& \le & \langle F(x^{k+\half})  , x^{k+\half} - x \rangle + \frac{\gamma_k}{2} \left(1-\tau^2 \right) \| x^{k+\half} - x^{k} \|^2 \nonumber \\
& \le &
\frac{\gamma_k}{2} \left[ \|x^{k} - x\|^2 - \|x^{k+1}-x\|^2  \right] .
\end{eqnarray*}
Dividing both sides by $\gamma_k$ yields
\begin{equation} \label{bound-1}
\frac{1}{\gamma_k}\left\langle  F(x)  ,  x^{k+\half} -  x \right\rangle + \frac{1}{2} \left(1-\tau^2 \right) \| x^{k+\half} - x^{k} \|^2 \le
\frac{1}{2} \left[ \|x^{k} - x\|^2 - \|x^{k+1}-x\|^2  \right] .
\end{equation}

Summing up the inequality \eqref{bound-1} from $k=1$ to $N$, and dividing the resulting inequality on both sides by $\Gamma_N$ we obtain
\begin{equation} \label{bound-2}
\left\langle F(x)  , \bar x_N - x \right\rangle + \frac{1-\tau^2}{2\Gamma_N }  \sum_{k=1}^N \| x^{k+\half} - x^{k} \|^2 \le
\frac{\|x^{1} - x\|^2 }{2\Gamma_N}
\end{equation}
for any $x\in \XX$. Taking $x=x^*$ in \eqref{bound-2} yields
\begin{equation} \label{bound-3}
\sum_{k=1}^N \| x^{k+\half} - x^{k} \|^2 \le \frac{\|x^{1} - x^*\|^2 }{1-\tau^2} .
\end{equation}

The so-called {\it mean inequality}\/ maintains that for any positive sequence $\{ a_k>0 \mid k=1,2,...,N\}$ and any real value $r$, if we define
\[
M_r (a) := \left( \frac{1}{N} \sum_{k=1}^N a_k^r \right)^{\frac{1}{r}}
\]
then we have $M_{r_1}(a) \le M_{r_2}(a)$ for any $r_1\le r_2$.

Now, if we let $a_k := \| x^{k+\half} - x^{k} \|^{-(p-1)}$, then we have $M_{-\frac{2}{p-1}} (a) \le M_1 (a)$; that is
\[
\left( \frac{1}{N} \sum_{k=1}^N  \| x^{k+\half} - x^{k} \|^2 \right)^{ -\frac{p-1}{2} } \le \frac{1}{N}  \sum_{k=1}^N  \| x^{k+\half} - x^{k} \|^{-(p-1)}.
\]
Therefore,
\begin{eqnarray*}
\Gamma_N &=& \frac{1}{L_p} \sum_{k=1}^N \| x^{k+\half} - x^{k} \|^{-(p-1)} \ge \frac{N}{L_p} \left( \frac{1}{N} \sum_{k=1}^N  \| x^{k+\half} - x^{k} \|^2 \right)^{ -\frac{p-1}{2} }  \\
& \overset{\eqref{bound-3}}{\ge} & \frac{N^{\frac{p+1}{2}}}{L_p} \left( \frac{1-\tau^2}{\|x^{1} - x^* \|^2 } \right)^{ \frac{p-1}{2} } \ge \frac{N^{\frac{p+1}{2}}}{L_p} \left( \frac{1-\tau^2}{D^2 } \right)^{ \frac{p-1}{2} } .
\end{eqnarray*}

Then, \eqref{bound-2} leads to
\[
m(\bar x_N) \le \frac{D^2}{2\Gamma_N} \le \frac{D^2 L_p}{2N^{\frac{p+1}{2}} \left( \frac{1-\tau^2}{D^2 } \right)^{ \frac{p-1}{2} }}
= \frac{D^{p+1}L_p}{2 \left(1-\tau^2\right)^{\frac{p-1}{2}} N^{\frac{p+1}{2}}} = O\left(\frac{1}{N^{\frac{p+1}{2}}}\right).
\]
\end{proof}

Theorem \ref{thm:are-global} implies that the proposed ARE generated $\bar x_N$ such that $m(\bar x_N)\le \epsilon$ with iteration complexity $\mathcal{O}\left(1/\epsilon^{\frac{2}{p+1}}\right)$. This matches the lower bound $\Omega\left(1/\epsilon^{\frac{2}{p+1}}\right)$ established in \cite{lin2022perseus}, hence optimal. The concept of solving a $(p+1)^\T$-order regularized VI subproblem is also proposed in \cite{adil2022optimal} and \cite{lin2022perseus}, therefore there is no need for an additional bisection subroutine in each iteration. The major difference between ARE and the method proposed in \cite{adil2022optimal} is that ARE uses a more general approximation operator $\tilde F(x;x^k)$ in the aforementioned regularized VI subproblem, which generalizes the Taylor approximation proposed in \cite{adil2022optimal}, and we provide a unified analysis as long as \eqref{approx-oper-bound} is satisfied. The flexibility of not being restricted to Taylor approximation is demonstrated in Section \ref{sec:subproblem-2} and Section \ref{sec:structured-are}, where examples are given for applying ARE with non-Taylor approximation $\tilde F(x;x^k)$ to problems when the original operator exhibits composite structure $F(x)=H(x)+G(x)$ or more generally $F(x)=H(G(x))$.

\subsection{Solving strongly monotone VI with ARE-restart}
\label{sec:ARE-global-strong}
While Theorem \ref{thm:are-global} establishes the optimal sublinear convergence for monotone VI, we shall incorporate a {\it restart} procedure into ARE \eqref{are-update} to further establish an improved linear convergence for strongly monotone VI. Similar restarting procedure is also seen in previous work \cite{ostroukhov2020tensor, lin2022perseus} for establishing the linear convergence. Below we give a detailed analysis for restarting the $p^\T$-order ARE \eqref{are-update}, referred to as {\it ARE-restart}.

The ARE-restart works in {\it epochs}. That is, for each epoch $m$, where $m=1,2,...$, a number of iterative updates \eqref{are-update} is performed and the output is set as the initial iterate at the start of the next epoch.

Let $N_m$ denote the number of iterations performed in $m^\T$ epoch, $m=1,2,...$. After each $N_m$ iterations (of ARE), we restart ($x^1\leftarrow \bar{x}_{N_m}$) and proceed to $(m+1)^\T$ epoch. The output of epoch $m$ is defined as:
\[
\bar x_{N_m} := \frac{\sum_{k=1}^{N_m} \frac{1}{\gamma_k} x^{k+\half}}{\Gamma_{N_m}} \in \XX, \mbox{ and } \Gamma_{N_m} := \sum_{k=1}^{N_m} \frac{1}{\gamma_k}.
\]

Denote $D_0=\|x^1-x^*\|$ as the distance to the solution from the very first initial iterate before any restart and note that $D_0\le D$. Let $0<\delta<1$ be a constant independent of the problem. Let us fix the iterations in each epoch:
\begin{eqnarray}
N_1=N_2=\cdots =N = \left(\frac{L_p}{2\delta\mu}\right)^{\frac{2}{p+1}}\left(\frac{D_0^2}{1-\tau^2}\right)^{\frac{p-1}{p+1}}.\label{iteration-in-epoch}
\end{eqnarray}

From the analysis in Theorem \ref{thm:are-global}, we can first reach \eqref{iter}, where by using the strong monotonicity we have:
\begin{eqnarray}
& & \mu\|x^{k+\half}-x\|^2+\langle F(x)  , x^{k+\half} - x \rangle + \frac{\gamma_k}{2} \left(1-\tau^2 \right) \| x^{k+\half} - x^{k} \|^2 \nonumber \\
& \le & \langle F(x^{k+\half})  , x^{k+\half} - x \rangle + \frac{\gamma_k}{2} \left(1-\tau^2 \right) \| x^{k+\half} - x^{k} \|^2 \nonumber \\
& \le &
\frac{\gamma_k}{2} \left[ \|x^{k} - x\|^2 - \|x^{k+1}-x\|^2  \right] .\label{iter-sm}
\end{eqnarray}
Taking $x=x^*$ and sum the inequality from $k=1$ to $N$:
\begin{eqnarray}
\mu\Gamma_N\|\bar{x}_N-x^*\|^2&\le&\sum\limits_{k=1}^N\frac{\mu}{\gamma_k}\|x^{k+\half}-x^*\|^2\nonumber\\
&\le& \frac{1}{2}\|x^1-x^*\|^2-\frac{1}{2}\|x^{N+1}-x^*\|^2-\frac{1-\tau^2}{2}\sum\limits_{k=1}^N\|x^{k+\half}-x^k\|^2, \label{tele-sm}
\end{eqnarray}
where the first inequality is due to the convexity of the squared norm function.

Now consider the first epoch, inequality \eqref{tele-sm} implies:
\begin{eqnarray}
\|\bar{x}_{N_1}-x^*\|^2\le \frac{1}{2\mu\Gamma_{N_1}}D_0^2,\nonumber
\end{eqnarray}
where
\begin{eqnarray}
\Gamma_{N_1}\ge \frac{N^{\frac{p+1}{2}}}{L_p}\left(\frac{1-\tau^2}{\|x^1-x^*\|^2}\right)^{\frac{p-1}{2}}=\frac{N^{\frac{p+1}{2}}}{L_p}\left(\frac{1-\tau^2}{D_0^2}\right)^{\frac{p-1}{2}}.\nonumber
\end{eqnarray}
Therefore,
\begin{eqnarray}
\|\bar{x}_{N_1}-x^*\|^2\le \frac{1}{2\mu\Gamma_{N_1}}D_0^2\le \frac{L_p}{2\mu}\frac{D_0^{p+1}}{(1-\tau^2)^{\frac{p-1}{2}}}\frac{1}{N^{\frac{p+1}{2}}}=\delta\cdot D_0^2.\nonumber
\end{eqnarray}
Now, in the second epoch, we take $x^1\leftarrow\bar{x}_{N_1}$. Similarly, we have:
\begin{eqnarray}
\|\bar{x}_{N_2}-x^*\|^2\le \frac{1}{2\mu\Gamma_{N_2}}\|\bar{x}_{N_1}-x^*\|^2\le\frac{\delta}{2\mu\Gamma_{N_2}}D_0^2,\nonumber
\end{eqnarray}
where the lower bound of $\Gamma_{N_2}$ can also be estimated from \eqref{tele-sm}, with an improved distance to solution $\|x^1-x^*\|^2=\|\bar{x}_{N_1}-x^*\|^2\le\delta D_0^2$:
\begin{eqnarray}
\Gamma_{N_2}\ge \frac{N^{\frac{p+1}{2}}}{L_p}\left(\frac{1-\tau^2}{\|\bar{x}_{N_1}-x^*\|^2}\right)^{\frac{p-1}{2}}\ge\frac{N^{\frac{p+1}{2}}}{L_p}\left(\frac{1-\tau^2}{\delta D_0^2}\right)^{\frac{p-1}{2}}.\nonumber
\end{eqnarray}
Note that in the second epoch, the lower bound of $\Gamma_{N_2}$ is improved by $\left(\frac{1}{\delta}\right)^{\frac{p-1}{2}}$. Then we have:
\begin{eqnarray}
\|\bar{x}_{N_2}-x^*\|^2\le \frac{\delta}{2\mu\Gamma_{N_2}}D_0^2\le \frac{L_p}{2\mu}\frac{D_0^{p+1}}{(1-\tau^2)^{\frac{p-1}{2}}}\frac{\delta^{\frac{p+1}{2}}}{N^{\frac{p+1}{2}}}=\delta^{\frac{p+3}{2}}\cdot D_0^2.\nonumber
\end{eqnarray}
Note that after second epoch, the distance is not decreased by a factor of $\delta^2$ but a factor of $\delta^{\frac{p+3}{2}}$ instead. {This is because while performing $N$ iterations in one epoch provides a decrease of $\delta$ in terms of the original distance $D_0$, starting from an iterate $x^1=\bar{x}^{N_1}$ in the second epoch provides an additional decrease of $\delta^{\frac{p+1}{2}}$ due to a better bound for $\|\bar{x}_{N_1}-x^*\|^2$ and $\Gamma_{N_2}$.} Now continue considering the third epoch:
\begin{eqnarray}
\|\bar{x}_{N_3}-x^*\|^2\le \frac{1}{2\mu\Gamma_{N_3}}\|\bar{x}_{N_2}-x^*\|^2\le\frac{\delta^{\frac{p+3}{2}}}{2\mu\Gamma_{N_3}}D_0^2,\nonumber
\end{eqnarray}
where
\begin{eqnarray}
\Gamma_{N_3}\ge \frac{N^{\frac{p+1}{2}}}{L_p}\left(\frac{1-\tau^2}{\|\bar{x}_{N_2}-x^*\|^2}\right)^{\frac{p-1}{2}}\ge\frac{N^{\frac{p+1}{2}}}{L_p}\left(\frac{1-\tau^2}{\delta^{\frac{p+3}{2}} D_0^2}\right)^{\frac{p-1}{2}}.\nonumber
\end{eqnarray}
Therefore,
\begin{eqnarray}
\|\bar{x}_{N_3}-x^*\|^2\le \frac{\delta^{\frac{p+3}{2}}}{2\mu\Gamma_{N_3}}D_0^2\le \frac{L_p}{2\mu}\frac{D_0^{p+1}}{(1-\tau^2)^{\frac{p-1}{2}}}\frac{\delta^{\frac{(p+3)(p+1)}{4}}}{N^{\frac{p+1}{2}}}=\delta^{\frac{(p+3)(p+1)}{4}+1}\cdot D_0^2.\nonumber
\end{eqnarray}
To summarize, after $m$ epochs, we have
\begin{eqnarray}
\|\bar{x}_{N_m}-x^*\|^2\le \delta^{t_{m}}\cdot D_0^2,\nonumber
\end{eqnarray}
where
\begin{eqnarray}
t_m = t_{m-1}\cdot\frac{p+1}{2}+1,\quad t_1 = 1.\nonumber
\end{eqnarray}
Then we have
\begin{eqnarray}
\|\bar{x}_{N_m}-x^*\|^2\le \delta^{t_{m}}\cdot D_0^2\le \delta^{\left(\frac{p+1}{2}\right)^{m-1}}\cdot D_0^2\le \delta^{\left(\frac{p+1}{2}\right)^{m-1}}\cdot D^2.\nonumber
\end{eqnarray}
That is, the total number of epochs required to have $\|\bar{x}_{N_m}-x^*\|^2\le\epsilon$ is given by
\begin{eqnarray}
\log_{\frac{p+1}{2}}\log_{\frac{1}{\delta}}\frac{D^2}{\epsilon},\label{super-linear-global}
\end{eqnarray}
for $p>1$, a superlinear rate for the {\it epochs}. For $p=1$, the log is one layer and we only have linear convergence. Note that, however, \eqref{super-linear-global} is only the number of {\it epochs} needs to be run, and for each epoch a fixed number of $N$ iterations is still performed, so the total iteration complexity is:
\begin{eqnarray}
\left(\frac{L_p}{2\delta\mu}\right)^{\frac{2}{p+1}}\left(\frac{D_0^2}{1-\tau^2}\right)^{\frac{p-1}{p+1}}\log_{\frac{p+1}{2}}\log_{\frac{1}{\delta}}\frac{D^2}{\epsilon}\nonumber
\end{eqnarray}
for $p>1$. For simplicity, we can take $\delta=\frac{1}{2}$ and replace $D_0$ with $D$ in the number of iterations $N$ in one epoch, which gives the complexity:
\[
\mathcal{O}\left(\left(\frac{L_p}{\mu}\right)^{\frac{2}{p+1}}\left(D^2\right)^{\frac{p-1}{p+1}}\log_{\frac{p+1}{2}}\log_{2}\frac{D^2}{\epsilon}\right).
\]
The result is summarized in the next theorem.
\bigskip
\begin{theorem}[Global convergence of ARE: Strongly monotone VI]
\label{thm:are-global-strong}
Let $\{x^k\}_{k\ge1}$ and $\{x^{k+0.5}\}_{k\ge1}$ be generated by ARE \eqref{are-update} and suppose $F(\cdot)$ is strongly monotone with $\mu>0$ and $\tilde{F}(x;x^k)$ is such that \eqref{approx-oper-bound} holds. By restarting ARE after each $N_i=N$ iterations in epoch $i$, where $N$ is given by \eqref{iteration-in-epoch}, the total number of epochs $m$ required to obtain an output $\bar{x}_{N_m}$ such that $\|\bar{x}_{N_m}-x^*\|^2\le\epsilon$ is given by:
\begin{eqnarray}
\left\{
\begin{array}{ll}
     \mathcal{O}\left(\log_{\frac{p+1}{2}}\log_{2}\frac{D^2}{\epsilon}\right),\quad & p>1, \\
     &\\
     \mathcal{O}\left(\log_2\frac{D^2}{\epsilon}\right),\quad &p=1.
\end{array}
\right.\nonumber
\end{eqnarray}
The total iteration complexity $mN$ is given by:
\begin{eqnarray}
\left\{
\begin{array}{ll}
     \mathcal{O}\left(\left(\frac{L_p}{\mu}\right)^{\frac{2}{p+1}}\left(D^2\right)^{\frac{p-1}{p+1}}\log_{\frac{p+1}{2}}\log_{2}\frac{D^2}{\epsilon}\right),\quad & p>1, \\
     &\\
     \mathcal{O}\left(\frac{L_1}{\mu}\log_2\frac{D^2}{\epsilon}\right),\quad &p=1.
\end{array}
\right.\label{global-restart-ARE}
\end{eqnarray}

\end{theorem}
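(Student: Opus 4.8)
The plan is to treat one epoch of $N$ iterations as a single contraction on the squared distance $\|x^1-x^*\|^2$ and then compose these contractions across epochs, the key point being that each restart feeds a smaller distance back into the lower bound for $\Gamma_N$, which compounds the per-epoch gain into a doubly-logarithmic epoch count.

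First I would recover, verbatim from the proof of Theorem~\ref{thm:are-global}, the per-iterate inequality~\eqref{iter}, and then sharpen its left-hand side via strong monotonicity $\langle F(x^{k+\half})-F(x), x^{k+\half}-x\rangle \ge \mu\|x^{k+\half}-x\|^2$, producing the extra term $\mu\|x^{k+\half}-x\|^2$ on the left as in~\eqref{iter-sm}. Setting $x=x^*$ (so that $\langle F(x^*),x^{k+\half}-x^*\rangle\ge 0$ may be dropped), dividing by $\gamma_k$, summing $k=1,\dots,N$ to telescope, and using convexity of $\|\cdot\|^2$ to pass from $\sum_k \gamma_k^{-1}\|x^{k+\half}-x^*\|^2$ to $\Gamma_N\|\bar x_N-x^*\|^2$ yields the single-epoch contraction $\|\bar x_N-x^*\|^2 \le \frac{1}{2\mu\Gamma_N}\|x^1-x^*\|^2$, which is exactly~\eqref{tele-sm} specialized at $x^*$. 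Next I would import the mean-inequality lower bound on $\Gamma_N$ established inside the proof of Theorem~\ref{thm:are-global}, namely $\Gamma_N \ge \frac{N^{(p+1)/2}}{L_p}\big(\frac{1-\tau^2}{\|x^1-x^*\|^2}\big)^{(p-1)/2}$, noting that its only input is the summed bound~\eqref{bound-3}, which holds afresh in every epoch with $x^1$ the epoch's starting iterate. Combining the two gives the closed-form per-epoch estimate $\|\bar x_N-x^*\|^2 \le \frac{L_p}{2\mu}\,\frac{\|x^1-x^*\|^{p+1}}{(1-\tau^2)^{(p-1)/2}\,N^{(p+1)/2}}$, and substituting the fixed $N$ from~\eqref{iteration-in-epoch} shows the first epoch contracts $D_0^2$ to $\delta D_0^2$.

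The heart of the argument is then the cross-epoch recursion. Because $\Gamma_N$ scales like $\|x^1-x^*\|^{-(p-1)}$, a starting distance already reduced by $\delta^{t}$ improves the $\Gamma_N$ factor by $\delta^{-t(p-1)/2}$, so one more epoch multiplies the exponent of $\delta$ by $\frac{p+1}{2}$ and adds $1$; tracking this gives $t_m = \frac{p+1}{2}t_{m-1}+1$ with $t_1=1$, hence $\|\bar x_{N_m}-x^*\|^2 \le \delta^{t_m}D^2$ with $t_m \ge \big(\frac{p+1}{2}\big)^{m-1}$ for $p>1$. Solving $\delta^{(\frac{p+1}{2})^{m-1}}D^2 \le \epsilon$ produces the doubly-logarithmic epoch count $O\big(\log_{(p+1)/2}\log_{1/\delta}(D^2/\epsilon)\big)$, and multiplying by the fixed $N$ (taking $\delta=\tfrac12$ and $D_0\le D$) yields the stated total complexity~\eqref{global-restart-ARE}; for $p=1$ the recursion degenerates to $t_m=t_{m-1}+1$, collapsing one logarithm and giving the linear rate. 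I expect the main obstacle to be bookkeeping this recursion cleanly — in particular justifying that $N$ may be held fixed across all epochs even though the true distance shrinks, and verifying the inequality $t_m \ge \big(\frac{p+1}{2}\big)^{m-1}$ that converts the affine recursion into the clean doubly-logarithmic bound.
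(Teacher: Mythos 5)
Your proposal is correct and follows essentially the same route as the paper's own argument: the strong-monotonicity refinement \eqref{iter-sm}, the telescoped per-epoch contraction \eqref{tele-sm} at $x=x^*$, the mean-inequality lower bound on $\Gamma_N$ re-instantiated with each epoch's starting distance, and the exponent recursion $t_m=\tfrac{p+1}{2}t_{m-1}+1$ with $t_m\ge\bigl(\tfrac{p+1}{2}\bigr)^{m-1}$ are exactly the steps the paper takes. The obstacles you flag are non-issues: holding $N$ fixed per \eqref{iteration-in-epoch} is harmless because a smaller starting distance only improves the $\Gamma_N$ bound (which is precisely what drives the recursion), and the bound $t_m\ge\bigl(\tfrac{p+1}{2}\bigr)^{m-1}$ follows by dropping the $+1$ and inducting.
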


Through a careful analysis of the restarting procedure, Theorem \ref{thm:are-global-strong} shows that by restarting ARE when $F(x)$ is strongly monotone, the optimal iteration complexity is achievable for $p=1$ and the improved iteration complexity is obtained for $p>1$, as summarized in \eqref{global-restart-ARE}. We note that the total number of epochs (or the number of restarting) is only of the order $\mathcal{O}\left(\log_{\frac{p+1}{2}}\log_2\frac{D^2}{\epsilon}\right)$, which is an improved bound compared to $\mathcal{O}\left(\log_2\frac{D^2}{\epsilon}\right)$ established in \cite{ostroukhov2020tensor, lin2022perseus}. This implies that the output iterate $\bar x_{N_j}$ after each epoch for $j=1,...,m$ converges towards $x^*$ at a superlinear rate, and the reason being that the lower bound for the averaging parameter $\Gamma_{N_j}$ is improved by an order of $\frac{p+1}{2}$. 

\section{The Local Convergence Analysis of ARE}
\label{sec:ARE-local}
In this section we shall analyze the local convergence behavior of ARE for strongly monotone $F$ (i.e.\ $\mu>0$) when $p>1$. A pure Newton method typically exhibits local quadratic convergence in optimization, and the same has been shown for VI \cite{taji1993globally, huang2022cubic}. In \cite{ostroukhov2020tensor}, while the global iterations proceed with restarting higher-order mirror-prox method \cite{bullins2020higher} and an iteration complexity similar to \eqref{global-restart-ARE} is established, the local iterations are performed by adopting CRN-SPP \cite{huang2022cubic} to obtain quadratic convergence. The local superlinear convergence is further improved in \cite{lin2022perseus} by restarting Perseus and in \cite{jiang2022generalized}, to the order $\frac{p+1}{2}$. In the following analysis, we show that in the $p^\T$-order ARE where \eqref{approx-oper-bound} is satisfied, 
then the 
$p^\T$-order local superlinear convergence rate holds, 
which is an improvement compared to existing work in the literature.

We first show that for the ARE update \eqref{are-update}, $\|x^{k+0.5}-x^*\|$ converges to zero $p^\T$
-order superlinearly compared to $\|x^k-x^*\|$, as long as $\|x^{k+0.5}-x^k\|$ is sufficiently small. By the definition of a VI solution for update $x^{k+0.5}$:
\begin{equation} \nonumber
\langle \tilde F(x^{k+\half};x^k) + L_p \|x^{k+\half}-x^k\|^{p-1} (x^{k+\half}-x^k) , x - x^{k+\half}\rangle \ge 0,\,\, \forall x \in \XX.
\end{equation}
Then
\begin{eqnarray}
&&\langle F(x^{k+0.5}),x^{k+0.5}-x\rangle\nonumber\\
&\le& \langle \tilde{F}(x^{k+0.5};x^k)-F(x^{k+0.5})+L_p\|x^{k+0.5}-x^k\|^{p-1}(x^{k+0.5}-x^k),x-x^{k+0.5}\rangle\nonumber\\
&\le& \left\|\tilde{F}(x^{k+0.5};x^k)-F(x^{k+0.5})+L_p\|x^{k+0.5}-x^k\|^{p-1}(x^{k+0.5}-x^k)\right\|\cdot\|x-x^{k+0.5}\|\nonumber\\
&\le& \left(\left\|\tilde{F}(x^{k+0.5};x^k)-F(x^{k+0.5})\right\|+\left\|L_p\|x^{k+0.5}-x^k\|^{p-1}(x^{k+0.5}-x^k)\right\|\right)\cdot\|x-x^{k+0.5}\|\nonumber\\
&\overset{\eqref{approx-oper-bound}}{\le}& (1+\tau)L_p\|x^{k+0.5}-x^k\|^p\cdot\|x-x^{k+0.5}\|.\label{local-step-1}\nonumber
\end{eqnarray}
Take $x=x^*$ and use the strong monotonicity of $F$:
\begin{eqnarray}
&&\mu\|x^{k+0.5}-x^*\|^2+\langle F(x^*),x^{k+0.5}-x^*\rangle\le\langle F(x^{k+0.5}),x^{k+0.5}-x^*\rangle\nonumber\\
&\le& (1+\tau)L_p\|x^{k+0.5}-x^k\|^p\cdot\|x^*-x^{k+0.5}\|,\label{local-step-2}\nonumber
\end{eqnarray}
then we have
\begin{eqnarray}
\|x^{k+0.5}-x^*\|\le\frac{(1+\tau)L_p}{\mu}\|x^{k+0.5}-x^k\|^p.\label{p-super-1}
\end{eqnarray}
Now, by the same analysis from \eqref{subVI}-\eqref{iter} and take $x=x^*$, we have:
\begin{equation} \nonumber
\langle F(x^{k+\half})  , x^{k+\half} - x^* \rangle + \frac{\gamma_k}{2} \left(1-\tau^2 \right) \| x^{k+\half} - x^{k} \|^2 \le
\frac{\gamma_k}{2} \left[ \|x^{k} - x^*\|^2 - \|x^{k+1}-x^*\|^2  \right] .
\end{equation}
Noticing that $\langle F(x^{k+\half})  , x^{k+\half} - x^* \rangle\ge \langle F(x^*),x^{k+0.5}-x^*\rangle\ge0$, the above inequality implies
\begin{eqnarray}
(1-\tau^2)\|x^{k+0.5}-x^k\|^2\le \|x^k-x^*\|^2.\label{p-super-2}
\end{eqnarray}
Combining \eqref{p-super-1} and \eqref{p-super-2} gives the {\it $p^\T$-order superlinear convergence}:
\begin{eqnarray}
\|x^{k+0.5}-x^*\|\le\frac{(1+\tau)L_p}{\mu(1-\tau^2)^{\frac{p}{2}}}\|x^{k}-x^*\|^p.\label{p-super-3}
\end{eqnarray}
Note, however, that the inequality \eqref{p-super-3} only holds within each iteration and $x^k$ in general is not converging towards $x^*$ $p^\T$-order superlinerly if a subsequent extra-gradient update is performed as in \eqref{are-update}. In fact, once the local convergence behavior is observed, the extra-gradient update should be suppressed and the algorithm should accept $x^{k+0.5}$ as the next iterate. We shall denote
\begin{eqnarray}
x^{k+1}:=x^{k+0.5} &:=& \VI_{\XX} \left(  \tilde F(x;x^k) + L_p \|x-x^k\|^{p-1} (x-x^k) \right)\label{ar-update}
\end{eqnarray}
as {\it Approximation-based Regularized (AR)} update. Algorithm \ref{alg:restart-ARE-local} incorporates the above decision process into ARE-restart proposed in \ref{sec:ARE-global-strong}, such that both the improved global iteration complexity \eqref{global-restart-ARE} and the local $p^\T$-order superlinear convergence are attained.

\begin{algorithm}[ht!]
	\caption{ARE-Restart with Local Superlinear Convergence}
	\begin{algorithmic}[1]
		\Require $x^1\in \XX$, $0<\alpha<1$, $D\ge\|x^1-x^*\|$, an inner iteration number
		\[
		N = \left\lceil\left(\frac{L_p}{\mu}\right)^{\frac{2}{p+1}}\left(\frac{D^2}{1-\tau^2}\right)^{\frac{p-1}{p+1}}
        \right\rceil.
		\]
		\State {\bf Step 0:} Set $k:=1$.
		\State {\bf Step 1:} Let
		\[
        x^{k+\half} := \VI_{\XX} \left(  \tilde F(x;x^k) + L_p \|x-x^k\|^{p-1} (x-x^k) \right) .
        \]
        If
        \begin{eqnarray}
        \|x^{k+\half} - x^k\|^{p-1} \le \frac{\alpha \sqrt{1-\tau^2}}{1+\tau} \frac{\mu}{L_p}\label{local-condition}
        \end{eqnarray}
        then $x^{k+1}:=x^{k+0.5}$ ({AR update}), set $k:=k+1$, and return to {\bf Step 1}. Otherwise, go to {\bf Step 2}.
		\State {\bf Step 2:} Let
		\[
        x^{k+1} := \mbox{arg}\min_{x \in \XX} \, \langle F(x^{k+\half}) , x - x^k \rangle + \frac{L_p\|x^{k+\half}-x^k\|^{p-1}}{2} \|x-x^k\|^2.\quad\mbox{(ARE-update)}
        \]
        If $k=N$, let
        \[
        \Gamma_{N} := \sum_{k=1}^{N} \frac{1}{\gamma_k}, \mbox{ and } \bar x_{N} := \frac{\sum_{k=1}^{N} \frac{1}{\gamma_k} x^{k+\half}}{\Gamma_{N}},
        \]
        set $x^1:=\bar x_N$, and return to {\bf Step 0}. Otherwise, set $k:=k+1$ and return to {\bf Step 1}.
	\end{algorithmic}
	\label{alg:restart-ARE-local}
\end{algorithm}

To verify the local convergence of Algorithm \ref{alg:restart-ARE-local}, we are left to show that once condition \eqref{local-condition} is satisfied and AR update (i.e.\ $x^{k+1}:=x^{k+0.5}$) is accepted in Step 1, the algorithm will continue repeating Step 1 to obtain
\begin{eqnarray}
\|x^{k+1}-x^*\|\le\frac{(1+\tau)L_p}{\mu(1-\tau^2)^{\frac{p}{2}}}\|x^{k}-x^*\|^p.\nonumber
\end{eqnarray}
Indeed, from the previous analysis with $x^{k+0.5}$ replaced with $x^{k+1}$ in \eqref{p-super-1} and \eqref{p-super-2}, we have
\begin{eqnarray}
\|x^{k+1} - x^k \| &\le& \frac{1}{\sqrt{1-\tau^2}} \|x^k - x^*\| ,\nonumber \\ 
\|x^{k+1}-x^*\| &\le& (1+\tau)\frac{L_p}{\mu} \, \|x^{k+1}-x^k\|^p ,\nonumber
\end{eqnarray}
which implies
\[
\|x^{k+2}-x^{k+1} \| \le \frac{1}{\sqrt{1-\tau^2}}\, \|x^{k+1}-x^*\| 
\le \frac{1+\tau}{\sqrt{1-\tau^2}} \frac{L_p}{\mu} \, \|x^{k+1}-x^k\|^p\le \alpha\|x^{k+1}-x^k\|,
\]
where the last inequality holds due to the condition \eqref{local-condition}. Therefore, $\{\|x^{k+1}-x^k\|\}$ becomes a contracting sequence once AR update is accepted, and Algorithm \ref{alg:restart-ARE-local} will repeat Step 1 until the designated total iteration number.

We summarize the iteration complexity of Algorithm \ref{alg:restart-ARE-local} in the next theorem.

\begin{theorem}
\label{thm:restart-are-local}
Let $\{x^k\}_{k\ge1}$ and $\{x^{k+0.5}\}_{k\ge1}$ be generated by Algorithm \ref{alg:restart-ARE-local} and suppose $F(\cdot)$ is strongly monotone with $\mu>0$ and $\tilde{F}(x;x^k)$ is such that \eqref{approx-oper-bound} holds with $p>1$. The total iteration complexity to reach $\|x^k-x^*\|\le\epsilon$ for some $\epsilon>0$ is given by:
\begin{eqnarray}
\tilde{\mathcal{O}}\left(\left(\frac{L_p}{\mu}\right)^{\frac{2}{p+1}}(D^2)^{\frac{p-1}{p+1}}+\log_{p}\log_2\frac{1}{\epsilon}\right).\label{local-iter-complexity}
\end{eqnarray}
\end{theorem}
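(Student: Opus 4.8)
The plan is to split the analysis into a \emph{global phase} and a \emph{local phase}, matching the two branches of Step~1 in Algorithm~\ref{alg:restart-ARE-local}, and to argue that the algorithm traverses them in this order without ever falling back. The global count will supply the first term of \eqref{local-iter-complexity}, and the local count the second.

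First I would treat the global phase, during which condition \eqref{local-condition} fails and hence every update is an ARE-update; in this regime Algorithm~\ref{alg:restart-ARE-local} coincides with ARE-restart, so Theorem~\ref{thm:are-global-strong} applies unchanged and yields $\|\bar x_{N_m}-x^*\|^2\le \delta^{(\frac{p+1}{2})^{m-1}}D^2$ after $m$ epochs (with $\delta=\tfrac12$). The key point is that \eqref{local-condition} is triggered once the iterate lies within an \emph{$\epsilon$-independent} radius of $x^*$: setting $R:=\sqrt{1-\tau^2}\left(\frac{\alpha\sqrt{1-\tau^2}}{1+\tau}\frac{\mu}{L_p}\right)^{1/(p-1)}$ and using the bound \eqref{p-super-2}, which holds for every ARE iterate, one sees that $\|x^k-x^*\|\le R$ forces $\|x^{k+\half}-x^k\|^{p-1}\le \frac{\alpha\sqrt{1-\tau^2}}{1+\tau}\frac{\mu}{L_p}$, i.e.\ \eqref{local-condition}. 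Since $R$ depends only on $\mu,L_p,\tau,\alpha$, the doubly-superlinear epoch bound of Theorem~\ref{thm:are-global-strong} shows that at most $m_0=\mathcal{O}\!\left(\log_{\frac{p+1}{2}}\log_2\frac{D^2}{R^2}\right)=\tilde{\mathcal{O}}(1)$ epochs are needed before $\|\bar x_{N_m}-x^*\|\le R$; restarting with $x^1=\bar x_{N_{m_0}}$ then guarantees \eqref{local-condition} at the first iteration of the next epoch, so the algorithm enters the local phase (and if \eqref{local-condition} happens to trigger earlier, the global phase is only shorter). Hence the total number of global iterations is $m_0\cdot N=\tilde{\mathcal{O}}\!\left((L_p/\mu)^{2/(p+1)}(D^2)^{(p-1)/(p+1)}\right)$.

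Next I would analyze the local phase. The discussion preceding the theorem already shows that once \eqref{local-condition} holds and an AR-update is accepted, the step sizes contract, $\|x^{k+2}-x^{k+1}\|\le\alpha\|x^{k+1}-x^k\|$, so \eqref{local-condition} remains satisfied at every subsequent iteration and Step~2 is never revisited; thus all further updates are AR-updates obeying the $p^\T$-order bound \eqref{p-super-3}, $\|x^{k+1}-x^*\|\le C\|x^k-x^*\|^p$ with $C:=\frac{(1+\tau)L_p}{\mu(1-\tau^2)^{p/2}}$. Writing $q_k:=C^{1/(p-1)}\|x^k-x^*\|$ converts this into $q_{k+1}\le q_k^{p}$, and the definition of $R$ gives $q_{k_0}\le\alpha^{1/(p-1)}<1$ at the first local iterate $k_0$, so $q_{k_0+j}\le q_{k_0}^{p^{j}}$. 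Solving $q_{k_0}^{p^{j}}\le C^{1/(p-1)}\epsilon$ for $j$ yields $j=\mathcal{O}\!\left(\log_p\log_2\frac1\epsilon\right)$ local iterations to reach $\|x^k-x^*\|\le\epsilon$.

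Summing the two counts gives \eqref{local-iter-complexity}. The main obstacle is establishing the clean, ordered transition between the two phases: I must verify (i) that throughout the global phase only ARE-updates occur, so that Theorem~\ref{thm:are-global-strong} may be invoked verbatim; (ii) that the switching radius $R$ is genuinely independent of $\epsilon$, so that the doubly-logarithmic epoch count $m_0$ is absorbed into $\tilde{\mathcal{O}}(1)$ and never contaminates the $\epsilon$-dependence; and (iii) that \eqref{local-condition}, once satisfied, is self-sustaining, so the algorithm cannot re-enter the global regime. Parts (i) and (iii) follow from the contraction already derived before the theorem, while (ii) reduces to checking that the constants appearing in \eqref{p-super-1}--\eqref{p-super-3} involve only $\mu,L_p,\tau,\alpha$ and not $\epsilon$.
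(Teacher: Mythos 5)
Your proposal is correct and follows essentially the same route as the paper: the paper likewise invokes Theorem \ref{thm:are-global-strong} for the global phase down to an $\epsilon$-independent threshold $\hat{\epsilon}=(1-\tau^2)\bigl(\frac{\alpha\sqrt{1-\tau^2}}{1+\tau}\cdot\frac{\mu}{L_p}\bigr)^{\frac{2}{p-1}}$ (which equals your $R^2$), uses \eqref{p-super-2} to show this radius triggers \eqref{local-condition}, and then appeals to the pre-theorem contraction and $p^\T$-order recursion \eqref{p-super-3} for the $\log_p\log_2\frac{1}{\epsilon}$ local count. Your write-up is in fact more explicit than the paper's (which omits the local-phase details), and your verification that the switching radius and the contraction constants depend only on $\mu,L_p,\tau,\alpha$ correctly fills in what the paper leaves implicit.
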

\begin{proof}
We omit the proof of the local iteration complexity $\log_p\log_2\frac{1}{\epsilon}$ in view of the earlier arguments. Note that we have used $\tilde{\mathcal{O}}$ to suppress the logarithmic part in the first term of \eqref{local-iter-complexity}. Since Algorithm \ref{alg:restart-ARE-local} adopts ARE-restart as global iterations, by Theorem \ref{thm:are-global-strong}, the iteration complexity requires to reach $\|x^{k}-x^*\|^2\le\hat{\epsilon}$ for some $\hat{\epsilon}>0$ is 
\[
\mathcal{O}\left(\left(\frac{L_p}{\mu}\right)^{\frac{2}{p+1}}\left(D^2\right)^{\frac{p-1}{p+1}}\log_{\frac{p+1}{2}}\log_{2}\frac{D_0^2}{\hat{\epsilon}^2}\right).
\]
In view of \eqref{p-super-2}, let $\hat{\epsilon}:=(1-\tau^2)\left(\frac{\alpha\sqrt{1-\tau^2}}{1+\tau}\cdot\frac{\mu}{L_p}\right)^{\frac{2}{p-1}}$, then we have:
\[
\|x^{k+0.5}-x^k\|^{p-1}\le \left(\frac{\|x^k-x^*\|^2}{1-\tau^2}\right)^{\frac{p-1}{2}}\le \left(\frac{\hat{\epsilon}}{1-\tau^2}\right)^{\frac{p-1}{2}}\le\frac{\alpha \sqrt{1-\tau^2}}{1+\tau} \frac{\mu}{L_p}.
\]

\end{proof}

As far as we know, the results on local superlinear convergence for higher-order VI methods are still quite limited in the literature. 
In \cite{huang2022cubic}, the authors establish quadratic convergence for strongly-convex-strongly-concave saddle point problem with a cubic regularized method CRN-SPP ($p=2$). Such local quadratic convergence result is also adopted by \cite{ostroukhov2020tensor} for the general $p^\T$-order method. This rate is further improved to $\frac{p+1}{2}$ in \cite{jiang2022generalized, lin2022perseus}. However, we show that for the $p^\T$-order ARE with $p>1$, the local superlinear convergence is of the order $p$, achieved by the AR update \eqref{ar-update}. As shown in Algorithm \ref{alg:restart-ARE-local}, there is an implementable criterion \eqref{local-condition} to determine whether to reject the extra step and continue to converge superlinearly. We also note the difference between the results in Theorem \ref{thm:are-global-strong} and Theorem \ref{thm:restart-are-local}. In Theorem \ref{thm:are-global-strong}, the iterates converge superlinerly after each {\it epoch}, which still requires $\mathcal{O}\left(\left(\frac{L_p}{\mu}\right)^{\frac{2}{p+1}}\left(D\right)^{\frac{p-1}{p+1}}\right)$ inner iterations between restarts. On the other hand, when $\eqref{local-condition}$ is satisfied and the algorithm starts to perform only the AR updates, the iterates start to converge superlinearly after each {\it iteration}. The overall iteration complexity is then given in \eqref{local-iter-complexity}.

\section{Solving Regularized VI Subproblem with $p=2$}
\label{sec:subproblem}

In the previous sections, we have presented global and local iteration complexity analysis for ARE \eqref{are-update}. We show in Theorem \ref{thm:are-global} that the proposed simple update form of ARE guarantees the same order of improved iteration complexity as \cite{jiang2022generalized, adil2022optimal, lin2022perseus} for $p>1$ under the monotone case, which is also optimal due to the lower bound established in \cite{lin2022perseus}. For strongly monotone VI, we show that by restarting ARE, the iterates after each epochs converge at a superlinear rate with per-epoch cost $\left(\frac{L_p}{\mu}\right)^{\frac{2}{p+1}}(D^2)^{\frac{p-1}{p+2}}$. We further show that by imposing an additional condition \eqref{local-condition} before performing the extra-step update, the local $p^\T$-order superlinear convergence is guaranteed. 

The aforementioned results are derived based on the assumption that the first step of ARE, which involves solving an approximation regularized VI subproblem
\begin{eqnarray}
x^{k+\half} := \VI_{\XX} \left(  \tilde F(x;x^k) + L_p \|x-x^k\|^{p-1} (x-x^k) \right),\label{subproblem-p}
\end{eqnarray}
can be efficiently performed with incomparable cost to the overall iterations. While this assumption is commonly made for higher-order methods especially for $p\ge2$ in order to focus on analyzing the iteration complexities \cite{jiang2022generalized, adil2022optimal, lin2022perseus}, we shall devote this section to the discussion on certain details for solving such subproblem \eqref{subproblem-p}. The rest of the section will focus on the case when $p=2$ and the approximation mapping $\tilde F(x;x^k)$ is the corresponding Taylor approximation of $F(x)$, which is arguably most practical in the higher-order regime and admits some meaningful simplification and/or transformation of the subproblem.

Let us rewrite the subproblem \eqref{subproblem-p} in the following form:
\begin{eqnarray}
\mbox{(S)}\quad&& x^{k+0.5}= \VI_{\XX}\left(F(x^k)+\nabla F(x^k)(x-x^k)+L_2\|x-x^k\|(x-x^k)\right)\label{subproblem-p-2}\\
&\Longleftrightarrow& \mbox{find $x^{k+0.5}$ s.t.}\nonumber\\
&&\langle F(x^k)+\nabla F(x^k)(x^{k+0.5}-x^k)+L_2\|x^{k+0.5}-x^k\|(x^{k+0.5}-x^k),x-x^{k+0.5}\rangle\ge0,\nonumber
\end{eqnarray}
for all $x\in\XX$. We shall refer to \eqref{subproblem-p-2} as subproblem (S) and discuss two types of methods for solving it. In essence, the two types of methods both reduce the original subproblem (S) to another subproblem that can be more easily solved, and by solving the latter subproblem iteratively, we are able to obtain a(n) (approximated) solution to (S). 

\subsection{Reduction to VI subproblem with linear mapping}
\label{sec:subproblem-1}
In view of \eqref{subproblem-p-2}, the operator in VI subproblem (S) takes the form of the sum of a linear operator $F(x^k)+\nabla F(x^k)(x-x^k)$ and a non-linear operator $L_2\|x-x^k\|(x-x^k)$, where the latter is the gradient mapping of the cubic regularization term and in general makes the original VI problem difficult to solve efficiently. The first type of methods then aim to reduce \eqref{subproblem-p-2} to an easier VI problem with linear operator only, by {\it parameterizing} the solution $x^{k+0.5}$ as following:
\begin{eqnarray}
\mbox{(SS1)}\quad x^{k+0.5}(\lambda):=\VI_{\XX}\left(F(x^k)+\nabla F(x^k)(x-x^k)+\lambda(x-x^k)\right),\label{lambda-subproblem}
\end{eqnarray}
with the goal of finding
\begin{eqnarray}
\lambda = L_2\|x^{k+0.5}(\lambda)-x^k\|.\label{lambda-eqn}
\end{eqnarray}
Note that subproblem (SS1) given in \eqref{lambda-subproblem} is now a VI with linear operator $\nabla F(x^k)+\lambda I$. In particular, when $\XX=\mathbb{R}^n$ (i.e.\ unconstrained), $x^{k+0.5}(\lambda)$ admits the closed-form expression:
\begin{eqnarray}
&&F(x^k)+\left(\nabla F(x^k)+\lambda I\right)(x^{k+0.5}(\lambda)-x^k)=0,\nonumber\\
&\Longleftrightarrow& x^{k+0.5}(\lambda)=x^k-\left(\nabla F(x^k)+\lambda I\right)^{-1}F(x^k).\nonumber
\end{eqnarray}
This enables us to solve \eqref{lambda-eqn} using the next equation system with one-dimensional variable $\lambda$ via Newton method:
\begin{eqnarray}
f(\lambda):=\lambda^2-L_2^2\|x^{k+0.5}(\lambda)-x^k\|^2=0.\nonumber
\end{eqnarray}
In each iteration of the Newton method, it is then required to calculate the Jacobian of $f(\lambda)$. We shall omit the implementation details here and refer the interested readers to Section 4 in \cite{huang2022cubic}, which describes a more involved decomposition for calculating the Jacobian under the saddle-point problem setting (where $F(x^k)$ is the gradient descent ascent field of the saddle function).

For the case where \eqref{lambda-subproblem} is constrained with general closed convex set, \cite{monteiro2012iteration} proposed a {\it bisection} procedure for solving \eqref{lambda-eqn}. Similar ideas are also adopted in the bisection subroutine in \cite{bullins2020higher, jiang2022generalized}. We briefly summarize the underlying concept of such method and refer the interested readers to \cite{monteiro2012iteration} for analysis and implementation details. Instead of solving the equality constraint \eqref{lambda-eqn}, one can extend it to inequality constraints:
\begin{eqnarray}
\frac{L_2}{2}\|x^{k+0.5}(\lambda)-x^k\|\le \lambda\le 2L_2\|x^{k+0.5}(\lambda)-x^k\|.\label{const-lambda}
\end{eqnarray}
Note a similar constraint in \cite{bullins2020higher} for $p=2$. It is shown in \cite{monteiro2012iteration} that $\lambda$ satisfying constraint \eqref{const-lambda} lies in a closed interval: $\lambda\in[t_-,t_+]$ for some $t_+>t_->0$, whose range $[t_-,t_+]\subset[\alpha_-,\alpha_+]$ can be determined through solving (SS1) \eqref{lambda-subproblem} once with initialized $\lambda_0$. Using a bisection method which uses $\lambda_+=\sqrt{\alpha_-\alpha_+}$, the total complexity of solving (SS1) \eqref{lambda-subproblem} with \eqref{const-lambda} being satisfied is given by:
\[
\log\left(\frac{\log(\alpha_+/\alpha_-)}{\log(t_+/t_-)}\right).
\]
Therefore, solving the subproblem (S) boils down to how to solve (SS1) efficiently at each iteration of the bisection method proposed in \cite{monteiro2012iteration}. Since the VI operator for solving (SS1) is linear, if the constraint set $\XX$ takes simpler forms such as $\XX:=\mathbb{R}^n_+$, \eqref{lambda-subproblem} can be reduced to a {\it linear complementarity problem} (LCP), which then can be solved efficiently by using, for example, interior point method \cite{facchinei2007finite}.

\subsection{Reduction to gradient projection}
\label{sec:subproblem-2}
In the second type of method, we propose an alternative procedure to solve (S), which applies a first-order iterative method (an inner loop) to solve the VI problem \eqref{subproblem-p-2} for an approximated solution for $x^{k+0.5}$. Let us define the following operator
\begin{eqnarray}
F'(x;x^k):=F(x^k)+\nabla F(x^k)(x-x^k)+L_2\|x-x^k\|(x-x^k).\label{subproblem-operator}
\end{eqnarray}
A naive way to implement the inner loop is to directly apply, for example, the extra-gradient method to solve $\VI_{\XX}(F'(x;x^k))$. The potential issue lies in the fact that $F'(x;x^k)$ is in general not Lipschitz continuous over the whole constraint $\XX$ due to the mapping $L_2\|x-x^k\|(x-x^k)$. Therefore, no guarantee on the performance of the inner loop can be established.

In order to implement a more efficient procedure for solving \eqref{subproblem-p-2} (or succinctly $\VI_{\XX}(F'(x;x^k))$), we first discuss a specific instance of the ARE update introduced in Section \ref{sec:ARE-global} with $p=1$. Let us define $F'(x):=F'(x;x^k)$ to simplify the notation and note that $F'(x)$ takes the summation form $F'(x)=H(x)+G(x)$. Assume that $H(\cdot)$ is Lipschitz continuous with constant $L_H$ and strongly monotone with modulus $\mu_H$, and $G(\cdot)$ is monotone. Consider the approximation operator $\tilde F'(x;y):=H(y)+G(x)$, and we have:
\[
\left\|\tilde F'(x;y)-F(x)\right\|=\left\|H(x)-H(y)\right\|\le L_H\|x-y\|.
\]
Therefore, based on the results in Section \ref{sec:ARE-global} (in particular Theorem \ref{thm:are-global-strong}), the following update procedure
\begin{eqnarray}
\left\{
\begin{array}{ccl}
\bar x^{t+0.5} &:=& \VI_{\XX}\left(H(\bar x^t)+G(x)+L_H(x-\bar x^t)\right) , \\
&&\\
\bar x^{t+1} &:=& \mbox{arg}\min\limits_{x \in \XX} \, \langle H(\bar x^{t+0.5})+G(\bar x^{t+0.5}) , x - \bar x^t \rangle + \frac{L_H}{2} \|x-\bar x^t\|^2,
\end{array}
\right.\label{asy-eg-update}
\end{eqnarray}
for $t=0,1,2,...$ is guaranteed to converge to an $\bar \epsilon$ solution with iteration complexity $\mathcal{O}\left(\frac{L_H}{\mu_H}\log\frac{1}{\bar \epsilon}\right)$. Indeed, method \eqref{asy-eg-update} is nothing but an ARE update instance \eqref{are-update}, where $F(x):=F'(x)=H(x)+G(x)$ and $\tilde F(x;\bar x^t):=\tilde F'(x;\bar x^t)=H(\bar x^t)+G(x)$, and $p=1$. Let us denote $H(x)=F(x^k)+\nabla F(x^k)(x-x^k)$ and $G(x)=L_2\|x-x^k\|(x-x^k)$. Indeed, $H(x)$ is Lipschitz continuous with $L_H=\|\nabla F(x^k)\|\le L$, and $H(x)+G(x)$ is strongly monotone with $\mu_H=\mu>0$ provided the original operator $F(x)$ is strongly monotone with $\mu>0$. Under this formulation, solving subproblem (S) \eqref{subproblem-p-2} is equivalent to solving $\VI_{\XX}\left(F'(x)\right)=\VI_{\XX}\left(H(x)+G(x)\right)$ and can be solved approximately by the iterative procedure \eqref{asy-eg-update} with iteration complexity $\mathcal{O}\left(\frac{L}{\mu}\log\frac{1}{\bar \epsilon}\right)$.

We now show that each iteration of \eqref{asy-eg-update} can be further reduced to two gradient projection steps, whose computational cost is significantly reduced compared to directly solving the original VI subproblem (S). Note that the second step for updating $\bar x^{t+1}$ requires a gradient projection step, while the first step requires solving $\bar x^{t+0.5}$ from a VI problem in the following form:
\begin{eqnarray}
\langle H(\bar x^t)+L_2\|\bar x^{t+0.5}- x^k\|(\bar x^{t+0.5}- x^k)+L_H(\bar x^{t+0.5}-\bar x^t),x-\bar x^{t+0.5}\rangle\ge0,\quad\forall x\in\XX,\nonumber
\end{eqnarray}
which is optimality condition of the optimization problem:
\begin{eqnarray}
\min\limits_{x\in\XX}\quad \frac{L_2}{3}\|x- x^k\|^3+\frac{L_H}{2}\|x-\bar x^t\|^2+H(\bar x^t)^\top x.\label{cubic-opt-sub}
\end{eqnarray}
The following analysis adopts a similar reformulation as proposed in \cite{nesterov2006constrained} to solve \eqref{cubic-opt-sub} . Let us first reformulate \eqref{cubic-opt-sub} into
\begin{eqnarray}
&&\arg\min\limits_{x\in\XX}\quad \frac{L_2}{3}\|x- x^k\|^3+\frac{L_H}{2}\|x-\bar x^t\|^2+H(\bar x^t)^\top x\nonumber\\
&=& \arg\min\limits_{x\in\XX}\quad \frac{L_2}{3}\|x- x^k\|^3+\frac{L_H}{2}\left(\|x-x^k\|^2+\|x-\bar x^t\|^2-\|x-x^k\|^2\right)+H(\bar x^t)^\top x\nonumber\\
&=& \arg\min\limits_{x\in\XX}\quad \frac{L_2}{3}\|x- x^k\|^3+\frac{L_H}{2}\|x-x^k\|^2+L_H(x^k-\bar x^t)^\top x+H(\bar x^t)^\top x\nonumber\\
&=& \arg\min\limits_{x\in\XX}\quad \frac{L_2}{3}\|x- x^k\|^3+\frac{L_H}{2}\|x-x^k\|^2+\left(L_H(x^k-\bar x^t)+H(\bar x^t)\right)^\top (x-x^k)\nonumber.
\end{eqnarray}
Denote
\[
g_t(x^k)=L_H(x^k-\bar x^t)+H(\bar x^t)=F(x^k)+\left(\nabla F(x^k)-I\right)(\bar x^t-x^k),
\]
for a given fixed $x^k$. Since
\[
\frac{1}{3}r^3=\max\limits_{\tau\ge0}\quad r^2\tau-\frac{2}{3}\tau^{\frac{3}{2}},
\]
we have
\begin{eqnarray}
&&\min\limits_{x\in\XX}\quad \frac{L_2}{3}\|x- x^k\|^3+\frac{L_H}{2}\|x-x^k\|^2+g_t(x^k)^\top (x-x^k)\nonumber\\
&=& \min\limits_{x\in\XX}\max\limits_{\tau\ge0}\quad L_2\left(\tau\|x-x^k\|^2-\frac{2}{3}\tau^{\frac{3}{2}}\right)+\frac{L_H}{2}\|x-x^k\|^2+g_t(x^k)^\top (x-x^k)\nonumber\\
&=&\max\limits_{\tau\ge0}\left( -\frac{2}{3}L_2\tau^{\frac{3}{2}}+\min\limits_{x\in\XX}\left\{g_t(x^k)^\top(x-x^k)+\left(L_2\tau+\frac{L_H}{2}\right)\|x-x^k\|^2\right\}\right).\label{max-min-subprob}
\end{eqnarray}
The inner minimization gives a closed-form solution, denoted as
\[
\bar x^{t+0.5}(\tau) = \mathbf{P}_{\XX}\left(x^k-\frac{1}{2L_2\tau+L_H}g_t(x^k)\right),
\]
where $\mathbf{P}_{\XX}$ is the projection operator onto $\XX$. On the other hand, the outer maximization of \eqref{max-min-subprob} is a simple one-dimensional concave maximization, and the solution given the expression of $\bar x^{t+0.5}(\tau)$ is given by the following:
\[
\tau^*=\arg\max\limits_{\tau}\quad -\frac{2}{3}L_2\tau^{\frac{3}{2}}-\frac{1}{4L_2\tau+2L_H}\|g_t(x^k)\|^2,
\]
which can be solved efficiently by any common software.

To summarize, the update process in \eqref{asy-eg-update} can be rewritten into
\begin{eqnarray}
\mbox{(SS2)}\quad\left\{
\begin{array}{ll}
     & \tau^*:=\arg\max\limits_{\tau}\quad -\frac{2}{3}L_2\tau^{\frac{3}{2}}-\frac{1}{4L_2\tau+2L_H}\|g_t(x^k)\|^2, \\
     \\
     & \bar x^{t+0.5}=\bar x^{t+0.5}(\tau^*) := \mathbf{P}_{\XX}\left(x^k-\frac{1}{2L_2\tau^*+L_H}g_t(x^k)\right),\\
     \\
     & \bar x^{t+1} := \mbox{arg}\min\limits_{x \in \XX} \, \langle H(\bar x^{t+0.5})+G(\bar x^{t+0.5}) , x - \bar x^t \rangle + \frac{L_H}{2} \|x-\bar x^t\|^2,
\end{array}
\right.\label{subprob-SS2}
\end{eqnarray}
whose major cost lies in performing two gradient projection steps. Process \eqref{subprob-SS2} is then preformed iteratively until we obtain an approximate solution $\|\bar x^t-x^{k+0.5}\|\le\bar \epsilon$ with iteration complexity $\mathcal{O}\left(\frac{L}{\mu}\log\frac{1}{\bar \epsilon}\right)$. Compared to the methods discussed in Section \ref{sec:subproblem-1}, the method discussed in this section in general can require more inner iterations to operate with, but at the same time solving subproblem (SS2) can also be performed with much less cost than solving (SS1).

\section{Structured ARE Schemes}
\label{sec:structured-are}

In the previous sections, we analyze the convergence properties of ARE in its general update form \eqref{are-update} without specifying the approximation mapping $\tilde F(x;x^k)$ and the corresponding order $p$. Such general form is powerful in that it enables us to establish unified analysis for potentially many different specific methods. We devote this section to the discussion on some of these examples and the connections to existing methods in the literature. 

Consider the following structured VI where the operator is given in the {\it composite form}:
\begin{eqnarray}
F(x)=H(x)+G(x).\label{composite-opt}
\end{eqnarray}
We shall discuss different realizations of ARE in solving $\VI_\mathcal{X}\left(F(x)\right)$. The first immediate example is the extra-gradient method, which is equivalent to ARE with $p=1$ and $\tilde F(x;x^k):=F(x^k)$:
\begin{eqnarray}
\left\{
\begin{array}{ccl}
x^{k+0.5} &:=& \VI_{\XX} \left(  F(x^k) + L_1  (x-x^k) \right) , \\
x^{k+1} &:=& \mbox{arg}\min\limits_{x \in \XX} \, \langle F(x^{k+0.5}) , x - x^k \rangle + \frac{L_1}{2} \|x-x^k\|^2.
\end{array}
\right.\label{ARE-eg}
\end{eqnarray}
The extra-gradient method \eqref{ARE-eg} treats $F(x)$ as a single operator without using the specific composite structure \eqref{composite-opt}. The proposed ARE, however, provides the possibilities of using alternative approximation operator $\tilde F(x;x^k)$ in the update. In particular, consider the case $p=1$ and $\tilde F(x;x^k):=H(x^k)+G(x)$:
\begin{eqnarray}
\left\{
\begin{array}{ccl}
x^{k+0.5} &:=& \VI_{\XX} \left(  H(x^k)+G(x) + L_1  (x-x^k) \right) , \\
x^{k+1} &:=& \mbox{arg}\min\limits_{x \in \XX} \, \langle F(x^{k+0.5}) , x - x^k \rangle + \frac{L_1}{2} \|x-x^k\|^2,
\end{array}
\right.\label{ARE-fw-1}
\end{eqnarray}
where the update of $x^{k+0.5}$ can be viewed as a combined gradient-projection/proximal point step, or a {\it proximal gradient} (projection) update. As we have also seen in Section \ref{sec:subproblem-2}, the potential advantage of performing such update is that we are able to relax the Lipschitz continuity assumption made for the overall operator $F(x)$. Indeed, the required condition for the iteration complexity results to hold for ARE is given by \eqref{approx-oper-bound}, while in the update \eqref{ARE-fw-1} we have:
\begin{eqnarray}
\|\Tilde{F}(x;x^k)-F(x)\|=\|H(x^k)-H(x)\|\le L_H\|x-x^k\|,\nonumber
\end{eqnarray}
where we assume $L_H$ is the Lipschitz constant for $H(x)$. Therefore, only the Lipschitz conitnuity of $H(x)$ is required, and the constant $L_1$ in \eqref{ARE-fw-1} can be replaced with $L_H$. In the example discussed in Section \ref{sec:subproblem-2}, the VI subproblem for solving $x^{k+0.5}$ can be further reduced to simpler forms if $G(x)$ is the gradient of some convex function.

We can also extend the original problem $\VI_\XX(F(x))$ to the monotone inclusion problem:
\begin{eqnarray}
0\in H(x^*)+G(x^*),\nonumber
\end{eqnarray}
where $G:\XX\rightrightarrows \mathbb{R}^n$ is a set-valued maximal monotone operator. Well-known maximal monotone operators include $\partial f$, the subdifferential of a proper closed convex function $f$, and $N_{\mathcal{X}}(x)$, the normal cone of a closed convex set $\mathcal{X}$. For further discussion regarding maximal monotone operator and monotone inclusion problem, the interested readers are referred to \cite{facchinei2007finite}. In view of the monotone inclusion problem, the same ARE update discussed earlier \eqref{ARE-fw-1} can be adjusted accordingly:
\begin{eqnarray}
\left\{
\begin{array}{ccl}
x^{k+0.5} &:=& \VI_{\XX} \left(  H(x^k)+u^{k+0.5} + L_1  (x-x^k) \right) , \\
x^{k+1} &:=& \mbox{arg}\min\limits_{x \in \XX} \, \langle H(x^{k+0.5})+u^{k+0.5} , x - x^k \rangle + \frac{L_1}{2} \|x-x^k\|^2,
\end{array}
\right.\label{ARE-fw-2}
\end{eqnarray}
where $u^{k+0.5}\in G(x^{k+0.5})$. Note that in the above expression, $x^{k+0.5}$ equivalently satisfies the following equation:
\begin{eqnarray}
H(x^k)+u^{k+0.5}+L_1(x^{k+0.5}-x^k)=0.\nonumber
\end{eqnarray}
Substituting $u^{k+0.5}=-H(x^k)-L_1(x^{k+0.5}-x^k)$ in the update of $x^{k+1}$, we get the following scheme:
\begin{eqnarray}
\left\{
\begin{array}{ccl}
x^{k+0.5} &:=& \VI_{\XX} \left(  H(x^k)+u^{k+0.5} + L_1  (x-x^k) \right) , \\
x^{k+1} &:=& \mbox{arg}\min\limits_{x \in \XX} \, \langle H(x^{k+0.5})-H(x^k) , x - x^k \rangle + \frac{L_1}{2} \|x-x^{k+0.5}\|^2,
\end{array}
\right.\label{ARE-fw-mod}
\end{eqnarray}
which is the modified forward-backward update proposed in \cite{tseng2000modified} by noticing that $x^{k+0.5}$ in \eqref{ARE-fw-mod} is also equivalent to the forward-backward step $x^{k+0.5}=\left(I+\frac{1}{L_1}G\right)^{-1}\left(I-\frac{1}{L_1}H\right)(x^k)$. This shows that the modified forward-backward method is indeed another important instance of ARE for the more general monotone inclusion problem.

Moving forward to the higher-order $(p\ge 2)$ ARE schemes, an immediate example is to take the Taylor approximation $\tilde F(x;x^k):=\sum\limits_{i=0}^{p-1}\frac{1}{i!}\nabla^{i}F(x^k)[x-x^k]^i$, resulting in the following update:
\begin{eqnarray}
\left\{
\begin{array}{ccl}
x^{k+0.5} &:=& \VI_{\XX} \left(  \sum\limits_{i=0}^{p-1}\frac{1}{i!}\nabla^{i}F(x^k)[x-x^k]^i + L_p \|x-x^k\|^{p-1} (x-x^k) \right) , \\
x^{k+1} &:=& \mbox{arg}\min\limits_{x \in \XX} \, \langle F(x^{k+0.5}) , x - x^k \rangle + \frac{L_p\|x^{k+0.5}-x^k\|^{p-1}}{2} \|x-x^k\|^2.
\end{array}
\right.\label{ARE-homp}
\end{eqnarray}
The above update \eqref{ARE-homp} can be viewed as equivalent forms of the NPE \cite{monteiro2012iteration} ($p=2$) and higher-order mirror-prox \cite{bullins2020higher, adil2022optimal} $(p\ge 2)$. In view of the previous discussion, it is then natural to consider the higher-order approximation operator in the form $\tilde F(x;x^k):=\sum\limits_{i=0}^{p-1}\frac{1}{i!}\nabla^{i}H(x^k)[x-x^k]^i+G(x)$ for the specific composite structure \eqref{composite-opt}, and the next scheme follows:
\begin{eqnarray}
\left\{
\begin{array}{ccl}
x^{k+0.5} &:=& \VI_{\XX} \left(  \sum\limits_{i=0}^{p-1}\frac{1}{i!}\nabla^{i}H(x^k)[x-x^k]^i+G(x) + L_p \|x-x^k\|^{p-1} (x-x^k) \right) , \\
x^{k+1} &:=& \mbox{arg}\min\limits_{x \in \XX} \, \langle F(x^{k+0.5}) , x - x^k \rangle + \frac{L_p\|x^{k+0.5}-x^k\|^{p-1}}{2} \|x-x^k\|^2.
\end{array}
\right.\label{ARE-homp-2}
\end{eqnarray}
The above scheme \eqref{ARE-homp-2} can be viewed as generalization of several existing methods. In addition to generalizing the higher-order mirror-prox method, it also generalizes the modified forward-backward method (in the form \eqref{ARE-fw-1}) to $p^\T$-order. Furthermore, it generalizes the tensor method proposed in \cite{doikov2019local} for composite optimization to solving composite VI. Indeed, consider the following problem:
\begin{eqnarray}
\min\limits_{x\in\XX}\quad h(x)+g(x).\nonumber
\end{eqnarray}
To simplify the discussion, assume both $h,g$ are convex and differentiable and denote $H(x):=\nabla h(x)$ and $G(x):=\nabla g(x)$. The following inequality defines the solution $x^{k+0.5}$ in \eqref{ARE-homp-2}:
\begin{eqnarray}
\langle \sum\limits_{i=0}^{p-1}\frac{1}{i!}\nabla^{i}H(x^k)[x^{k+0.5}-x^k]^i+G(x^{k+0.5}) + L_p \|x^{k+0.5}-x^k\|^{p-1} (x^{k+0.5}-x^k),x-x^{k+0.5}\rangle\ge0,\quad\forall x\in\XX,\nonumber
\end{eqnarray}
which is equivalent to
\begin{eqnarray}
x^{k+0.5}:=\arg\min\limits_{x\in\XX}\sum\limits_{i=1}^{p}\frac{1}{i!}\nabla^{i}h(x^k)[x-x^k]^i+g(x)+\frac{L_p}{p+1}\|x-x^k\|^{p+1}.\label{tensor-composite-VI}
\end{eqnarray}
Note that in the context of composite optimization \cite{doikov2019local}, the problem is unconstrained and the minimization step \eqref{tensor-composite-VI} is performed over the domain of $g(x)$. In addition, while the acceleration in optimization requires an additional sequence $\{y^k\}$ so that the update \eqref{tensor-composite-VI} is performed at $y^k$ instead of $x^k$ (for example, FISTA \cite{beck2009fast} and Nesterov's accelerated tensor method \cite{nesterov2018implementable}), the acceleration in VI in general takes the form of the extra-gradient step such as the update of $x^{k+1}$ in \eqref{ARE-homp-2}.

Next, we further consider the following more general composite VI model with the operator:
\begin{eqnarray}
F(x)=H(G(x)).\label{general-composite-opt}
\end{eqnarray}
Obviously, if we let $G(x)=G_1(x)+G_2(x)$ and $H(x)=x$, the general composite operator \eqref{general-composite-opt} reduces to the special case in the summation form \eqref{composite-opt}. This general model enables us to extend the approximation schemes discussed earlier, as shown in the next two examples. The first example is an {\it outer approximation}:
\begin{eqnarray}
\tilde{F}(x;x^k):=\tilde H(G(x);G(x^k)),\nonumber
\end{eqnarray}
which replaces the outer operator $H(\cdot)$ with an approximation operator $\tilde H(\cdot\,;y)$ that satisfies the condition \eqref{approx-oper-bound} with some fixed $y$ and constant $L_p:=L_H$. The resulting overall approximation $\tilde F(x;x^k)$ hence satisfies \eqref{approx-oper-bound} as well from the following bound:
\begin{eqnarray}
\left\|\tilde F(x;x^k)-F(x)\right\|&=&\left\|\tilde H(G(x);G(x^k))-H(G(x))\right\|\nonumber\\
&\le& \tau L_H\left\|G(x)-G(x^k)\right\|^p\le \tau L_HL_G^p\|x-x^k\|^p,\nonumber
\end{eqnarray}
where we also assume $G(x)$ is Lipschitz continuous with constant $L_G$. As an exemplifying scheme, let $\tilde H(\cdot\,;y)$ be the Taylor approximation of $H(\cdot)$ with $p=2$, which results in the next update scheme:
\begin{eqnarray}
\left\{
\begin{array}{ccl}
x^{k+0.5} &:=& \VI_{\XX} \left(  H(G(x^k))+\nabla H(G(x^k))\left(G(x)-G(x^k)\right) + L_HL_G^2 \|x-x^k\| (x-x^k) \right) , \\
x^{k+1} &:=& \mbox{arg}\min\limits_{x \in \XX} \, \langle F(x^{k+0.5}) , x - x^k \rangle + \frac{L_HL_G^2\|x^{k+0.5}-x^k\|}{2} \|x-x^k\|^2.
\end{array}
\right.\label{outer-linearization-update}
\end{eqnarray}
Note that in this example, even if the outer approximation operator $\tilde H(\cdot\,;y)$ is the Taylor approximation, the overall approximation operator $\tilde F(x;x^k):=H(G(x^k))+\nabla H(G(x^k))\left(G(x)-G(x^k)\right)$ is not (it is not even linear unless $G(\cdot)$ is). In general, $\tilde H(\cdot\,;y)$ needs not be the Taylor approximation but can be any approximation satisfying \eqref{approx-oper-bound}, such as the ones discussed earlier.

The second example based on the composite VI model \eqref{general-composite-opt} is an {\it inner approximation}:
\begin{eqnarray}
\tilde F(x;x^k):=H(\tilde{G}(x;x^k)),\nonumber
\end{eqnarray}
which replaces the inner operator $G(x)$ with an approximation operator $\tilde G(x;x^k)$ that satisfies the condition \eqref{approx-oper-bound}, with the constant now defined as $L_p:=L_G$. Similarly, we have:
\begin{eqnarray}
\left\|\tilde F(x;x^k)-F(x)\right\|&=&\left\| H(\tilde G(x;x^k))-H(G(x))\right\|\nonumber\\
&\le& L_H\left\|\tilde G(x;x^k)-G(x)\right\|\le \tau L_HL_G\|x-x^k\|^p,\nonumber
\end{eqnarray}
which indicates that $\tilde F(x;x^k)$ also satisfies \eqref{approx-oper-bound} as long as $H(\cdot)$ is Lipschitz continuous with $L_H$. If $\tilde G(x;x^k)$ is the Taylor approximation of $G(x)$ at $x^k$ with $p=2$, we have the following scheme:
\begin{eqnarray}
\left\{
\begin{array}{ccl}
x^{k+0.5} &:=& \VI_{\XX} \left(  H\left(G(x^k)+\nabla G(x^k)(x-x^k)\right) + L_HL_G \|x-x^k\| (x-x^k) \right) , \\
x^{k+1} &:=& \mbox{arg}\min\limits_{x \in \XX} \, \langle F(x^{k+0.5}) , x - x^k \rangle + \frac{L_HL_G\|x^{k+0.5}-x^k\|}{2} \|x-x^k\|^2.
\end{array}
\right.\label{inner-linearization-update}
\end{eqnarray}
Again, the overall approximation operator $\tilde F(x;x^k)$ is not Taylor approximation even if the inner approximation operator $\tilde G(x;x^k)$ is, and it is not linear unless $H(\cdot)$ is. While the examples in \eqref{outer-linearization-update} and \eqref{inner-linearization-update} use a similar concept to construct the approximation operator $\tilde F(x;x^k)$, the resulting update scheme can be quite different given how we identify the specific composite structure $F(x)=H(G(x))$ in a problem. 

In this section, we first discuss several structured ARE schemes based on the composite form of the operator \eqref{composite-opt}, which either coincides with or generalizes existing methods. We further discuss the more general composite VI model \eqref{general-composite-opt} and present two different examples to illustrate the concept of outer approximation and inner approximation. We remark that composite VI may take even more general forms such as multiple layers $F(x)=H_1(H_2(...(H_n(x))))$, or multiple blocks $F(x)=H(G_1(x),G_2(x),...,G_n(x))$, or arbitrary combinations of these two. Developing specific schemes based on these composite forms can be highly dependent on each individual problem at hand and the subproblem of solving $x^{k+0.5}$ may be difficult. However, the purpose of this paper is to reveal the potentials of a general scheme 
of approximation used in the ARE framework, by pointing out possibilities other than the most commonly applied Taylor approximations in many existing schemes.  By taking the structure of the VI operator into consideration, ARE can possibly include even more complicated schemes than the ones discussed in this section.  
As long as certain assumptions are satisfied and one is able to develop efficient subroutines for solving the VI subproblem, the results established in earlier sections can immediately provide optimal iteration complexity guarantee for the new scheme.

\section{Numerical Experiments}
\label{sec:numerical}
In this section, we examine the convergence of ARE and ARE-restart with $p=2$ and compare the performance with other common first-order methods. We consider the following unconstrained saddle point problem in the experiment:
\begin{eqnarray}
    \min\limits_{x\in\mathbb{R}^n}\max\limits_{y\in\mathbb{R}^m}f(x,y) & = & \frac{1}{M_1}\sum\limits_{i=1}^{M_1}\ln(1+e^{-a_i^{\top}x})+\frac{\lambda}{2}\|x\|^2 \nonumber\\
    && +x^\top Ay-\frac{1}{M_2}\sum\limits_{j=1}^{M_2}\ln(1+e^{-b_j^{\top}y})-\frac{\lambda}{2}\|y\|^2.\label{saddle-point-prob}
\end{eqnarray}
To transform the saddle point problem \eqref{saddle-point-prob} into equivalent VI formulation, let us redefine the VI variable as $u=(x,y)^\top$ and the operator
\begin{eqnarray}
    F(u) = \begin{pmatrix}-\frac{1}{M_1}\sum\limits_{i=1}^{M_1}\frac{a_i}{1+e^{-a_i^\top x}}+\lambda x+Ay\\-\frac{1}{M_2}\sum\limits_{j=1}^{M_2}\frac{b_j}{1+e^{-b_j^\top y}}+\lambda y-A^\top x\end{pmatrix},\nonumber
\end{eqnarray}
with the problem defined as $F(u)=0$. The ARE and ARE-restart implemented in the experiment specifically use the Taylor approximation as the approximation operator $\tilde{F}(u,u^{k}):=F(u^k)+\nabla F(u^k)(u-u^k)$ and can be expressed as:
\begin{eqnarray}
\left\{
\begin{array}{ccl}
u^{k+0.5} &:=& \VI_{\XX} \left(  F(u^k)+\nabla F(u^k)(u-u^k) + L_2 \|u-u^k\| (u-u^k) \right) , \\
u^{k+1} &:=& \mbox{arg}\min\limits_{u \in \XX} \, \langle F(u^{k+0.5}) , u - u^k \rangle + \frac{L_2\|u^{k+0.5}-u^k\|}{2} \|u-u^k\|^2.
\end{array}
\right.\label{ARE-p2-Taylor-update}
\end{eqnarray}
Since the original saddle point problem is unconstrained, we have $\XX:=\mathbb{R}^n\times\mathbb{R}^m$, and the VI subproblem for solving $u^{k+0.5}$ is equivalent to solving the equation:
\begin{eqnarray}
F(u^k)+\nabla F(u^k)(u^{k+0.5}-u^k) + L_2 \|u^{k+0.5}-u^k\| (u^{k+0.5}-u^k)=0,\nonumber
\end{eqnarray}
which can be solved via a Newton method (see discussions in Section \ref{sec:subproblem-1}). For a more detailed implementation, the interested readers are referred to Section 4 in \cite{huang2022cubic}. The restart procedure of update \eqref{ARE-p2-Taylor-update} is described in Section \ref{sec:ARE-global-strong}, and we use a pre-defined number for inner iterations between each restart.

The experiment is conducted under Matlab 2018 environment, and the problem parameters are as follows. The number of date points is $M_1=M_2=100$; the problem dimensions are $m=2n=50$; the elements of $a_i,b_j,A$ are generated by independent standard normal distribution; the second-order smoothness constant $L_2$ is estimated as 0.3. Note that the operator $F(u)$ is strongly monotone with modulus $\lambda$, which is varied to observe different convergence behaviors. The purpose of the experiments is to verify the convergence of ARE and ARE-restart, and we use the first-order methods, extra-gradient and OGDA, as the benchmarks for comparison. The convergence is measured as $\|F(u)\|$, and the results are presented in Figure \ref{fig:lambda1}-\ref{fig:lambda0001}. 

\begin{figure}[htbp]
\centering
\begin{minipage}[t]{0.48\textwidth}
\centering
\includegraphics[width=7cm]{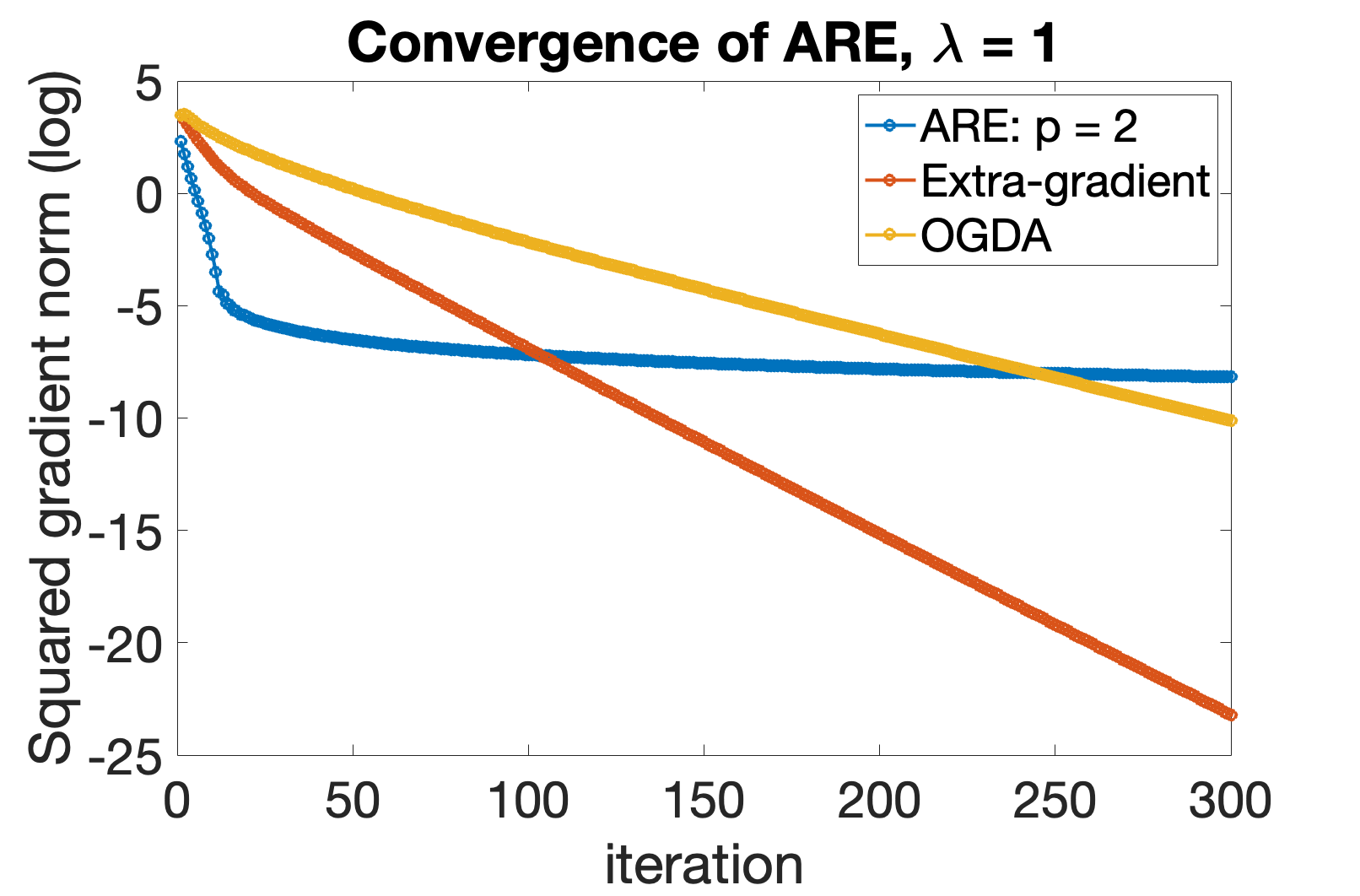}
\end{minipage}
\begin{minipage}[t]{0.48\textwidth}
\centering
\includegraphics[width=7cm]{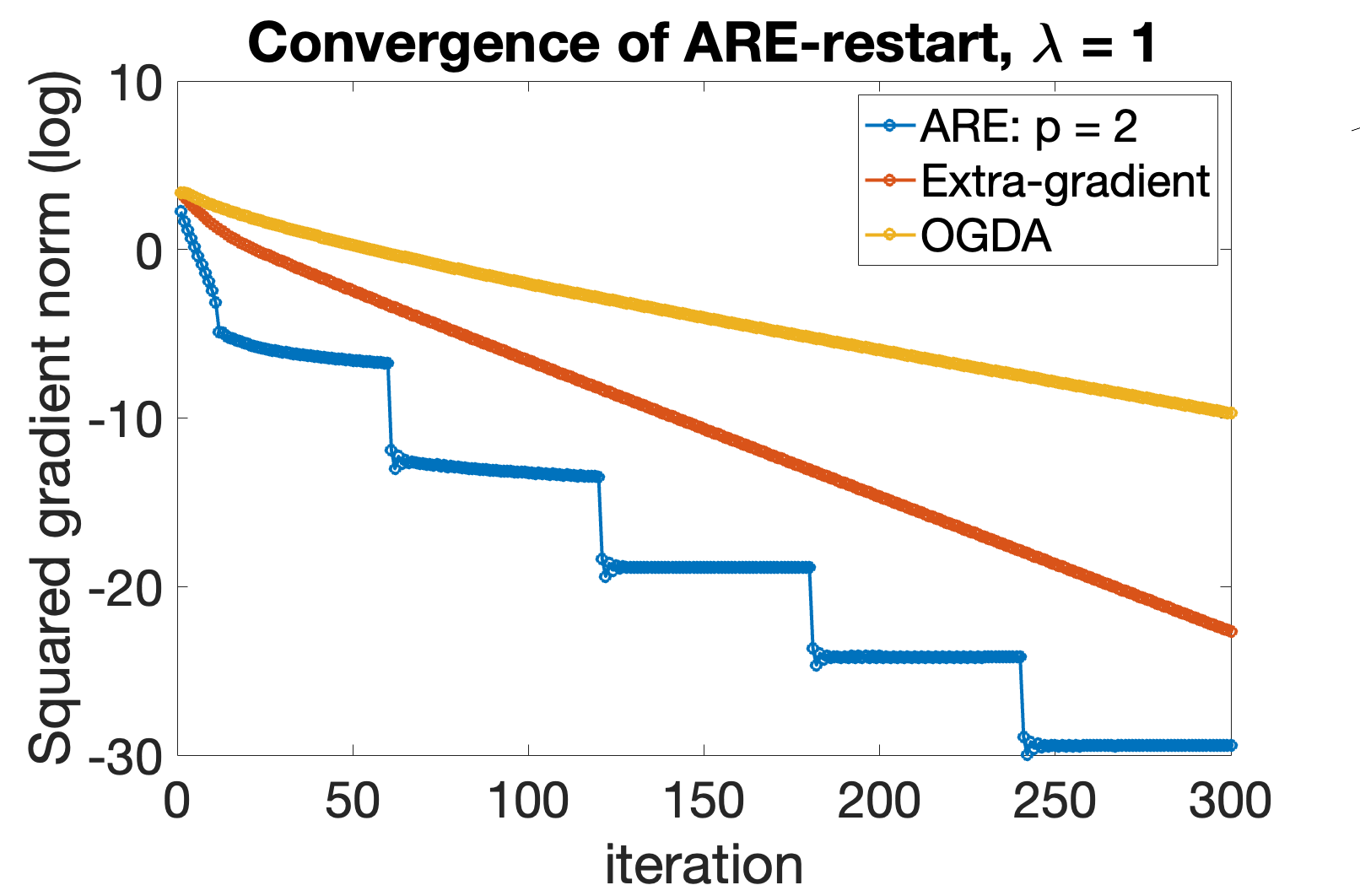}
\end{minipage}
\caption{Convergence in strongly monotone VI with $\lambda=1$}
\label{fig:lambda1}
\end{figure}
\begin{figure}[htbp]
\centering
\begin{minipage}[t]{0.48\textwidth}
\centering
\includegraphics[width=7cm]{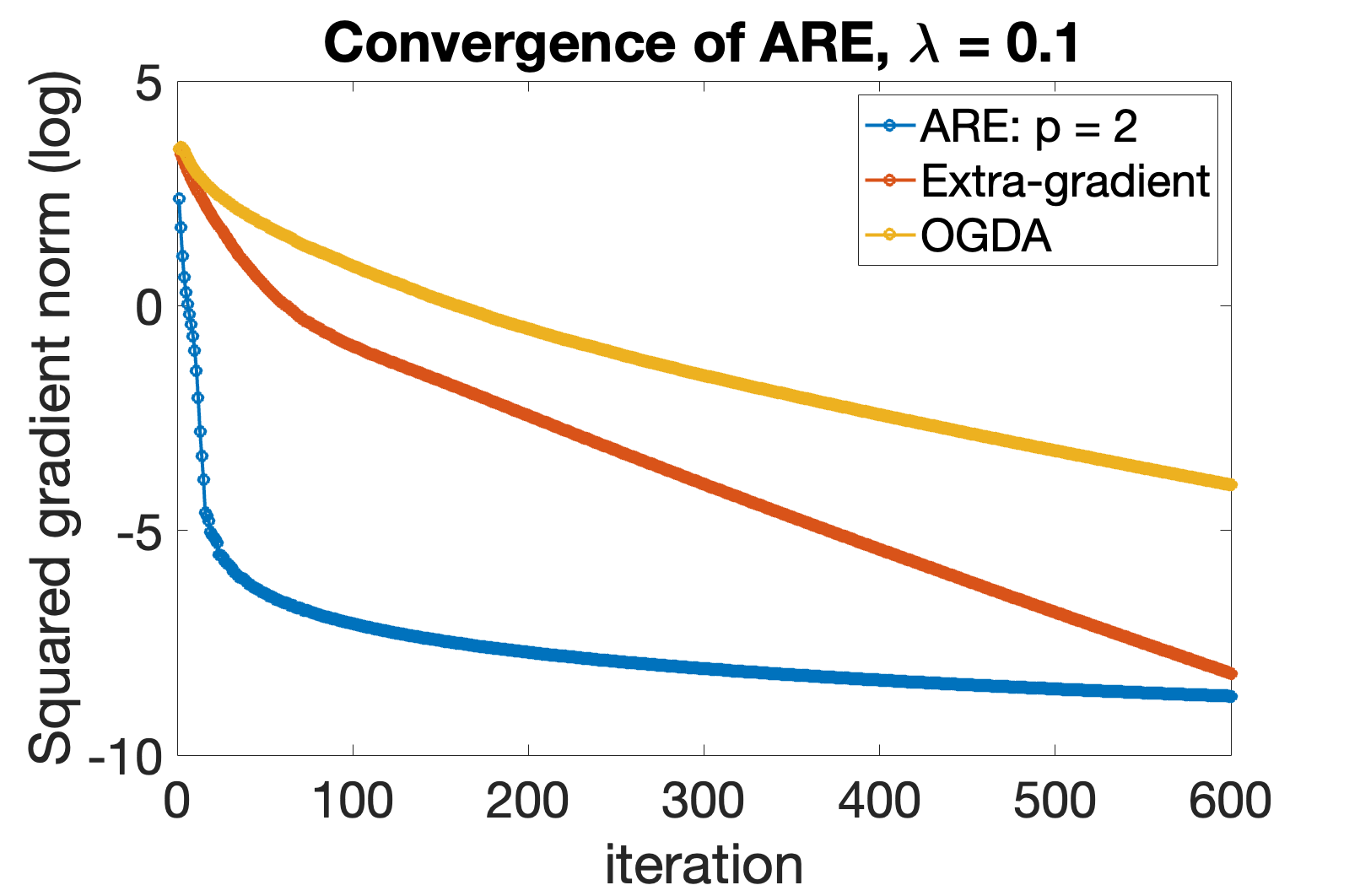}
\end{minipage}
\begin{minipage}[t]{0.48\textwidth}
\centering
\includegraphics[width=7cm]{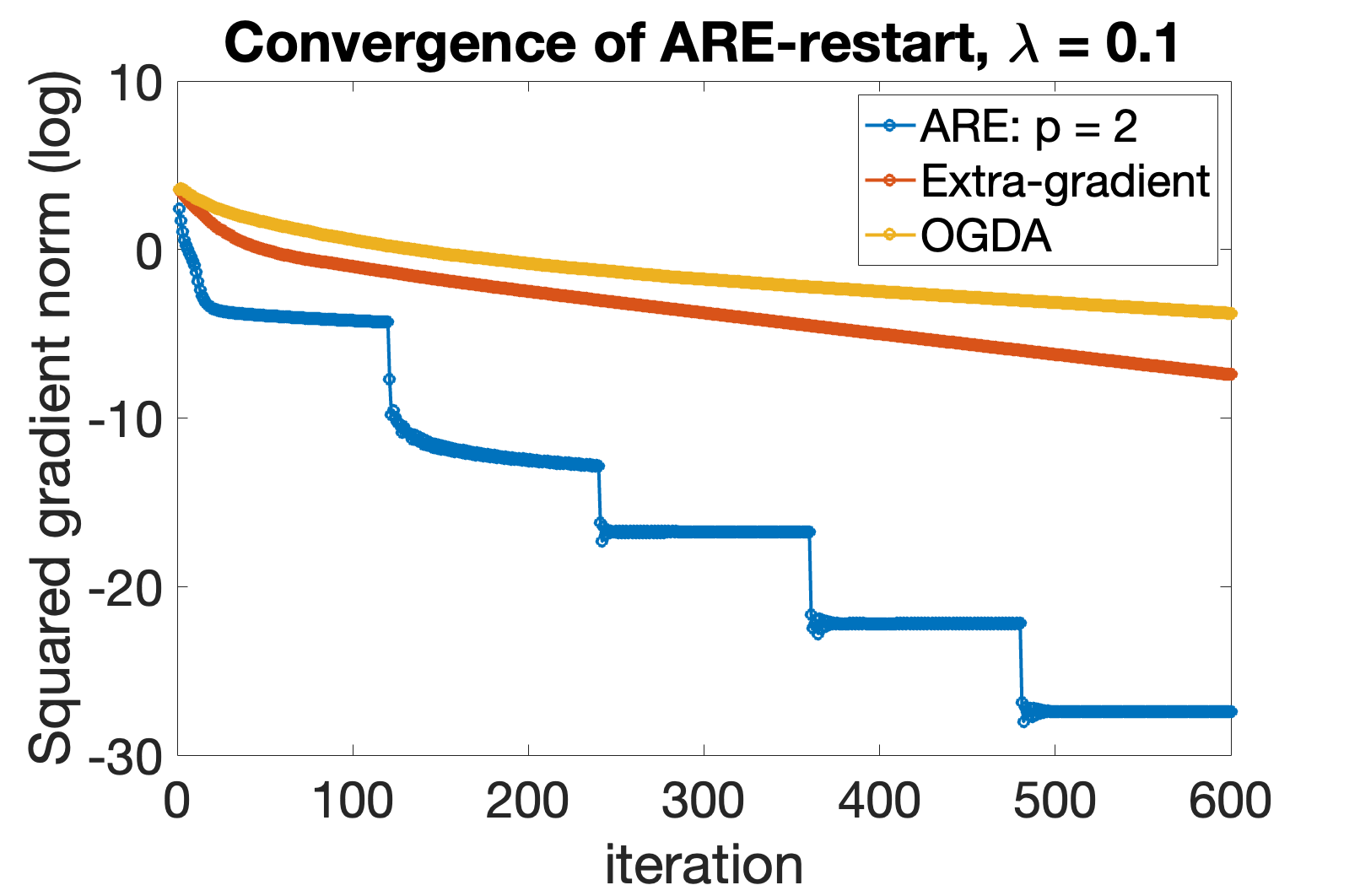}
\end{minipage}
\caption{Convergence in strongly monotone VI with $\lambda=0.1$}
\label{fig:lambda01}
\end{figure}

The convergence of ARE are shown in the left plots of Figure \ref{fig:lambda1}, Figure \ref{fig:lambda01}, and Figure \ref{fig:lambda0001}. All of them show clear sublinear convergence for the averaged iterates
\[
\Bar{u}_k:=\frac{\sum\limits_{i=1}^k\frac{u^{i+0.5}}{\gamma_i}}{\Gamma_k},\quad \Gamma_k:=\sum\limits_{i=1}^k\gamma_i^{-1},
\]
where $\gamma_i=L_2\|u^{i+0.5}-u^i\|$. Indeed, a sublinear convergence rate $\mathcal{O}\left(1/k^{\frac{3}{2}}\right)$ is guaranteed for ARE with $p=2$. However, when the problem is strongly monotone ($\lambda=1$), it will take significantly more iterations to converge to very high precision ($\|F(u)\|< 10^{-10}$) compared to the first-order methods extra-gradient and OGDA, which are designed to better exploit the strong monotonicity and admit linear convergence. However, when $\lambda$ is small ($\lambda=0.001$) and the problem becomes closer to a VI that is merely monotone, the performance of these first-order methods deteriorate fast to sublinear convergence that is significantly slower than ARE, a second-order method. On the other hand, ARE-restart (right plots of Figure \ref{fig:lambda1}, Figure \ref{fig:lambda01}, and Figure \ref{fig:lambda0001}) shows clear improvement over the first-order methods regardless of the strong monotonicity modulus $\lambda$. The process of restart is crucial in these experiments to take advantage of the strong monotonicity in the problem and bring the convergence of ARE beyond sublinear convergence to linear, or even superlinear, convergence. In the results shown in Figure \ref{fig:lambda1}-Figure \ref{fig:lambda0001}, the superlinear convergence happens immediately after each restart, followed by sublinear convergence in the rest of the epoch before the next restart. This particular convergence behavior enables the iterates of ARE-restart to quickly converge to high precision within much fewer iterations compared to ARE or other first-order methods.

\begin{figure}[htbp]
\centering
\begin{minipage}[t]{0.48\textwidth}
\centering
\includegraphics[width=7cm]{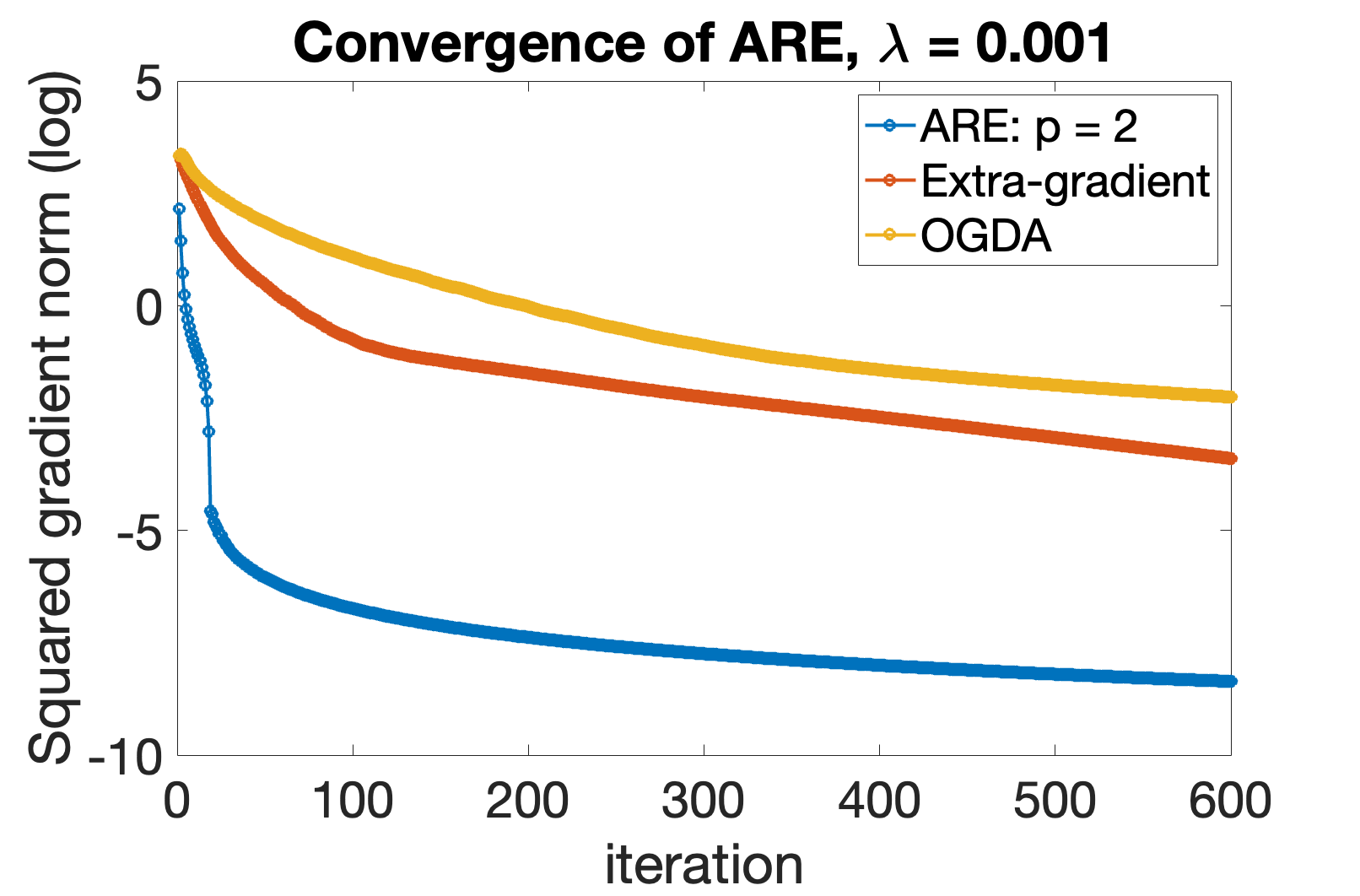}
\end{minipage}
\begin{minipage}[t]{0.48\textwidth}
\centering
\includegraphics[width=7cm]{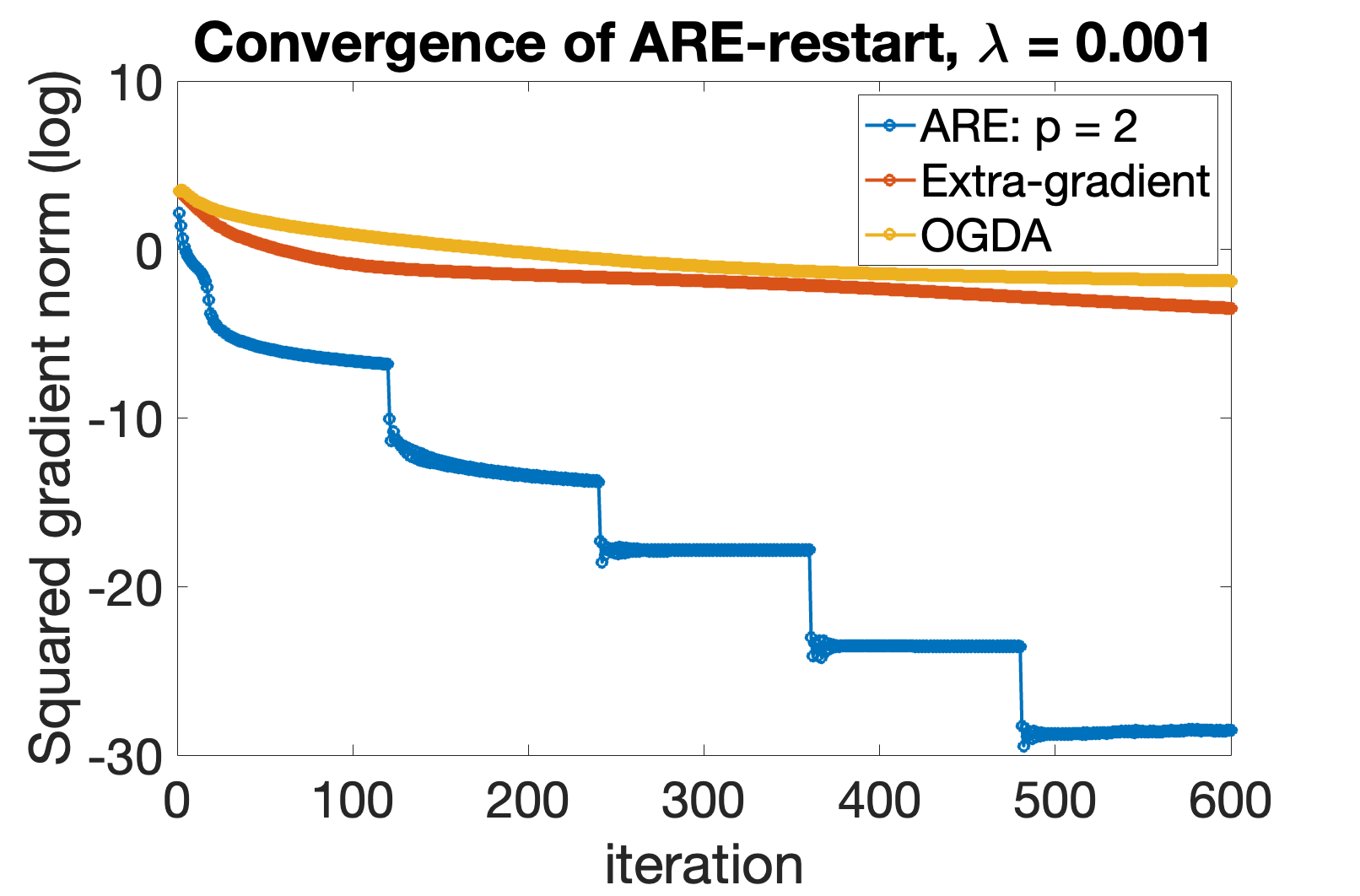}
\end{minipage}
\caption{Convergence in strongly monotone VI with $\lambda=0.001$}
\label{fig:lambda0001}
\end{figure}

\section{Conclusion}
\label{sec:conclusion}

In this paper, we propose the approximation-based regularized extra-gradient (ARE) scheme for solving monotone VI. The key feature of ARE is to solve a regularized VI subproblem in the first step, where the operator consists of a general approximation mapping satisfying a $p^\T$-order Lipschitz bound \eqref{approx-oper-bound} and the gradient mapping of a $(p+1)^\T$-order regularization. Iteration complexities are established for both monotone VI (ARE) and strongly monotone VI (ARE-restart), and the results match the lower bound for general $p^\T$-order methods. We further analyze the local convergence behavior for strongly monotone VI when $p>1$ and establish $p^\T$-order superlinear convergence, which is an improvement over the existing results. 

By introducing the general approximation mapping that satisfies the Lipschitz bound, ARE can be viewed as a more general framework that includes multiple existing methods in the literature. As a result, unified results can be established for different methods under the general ARE framework. We then discuss detailed implementations for solving the regularized VI subproblem under special cases, as well as some specialized ARE schemes if the VI operator has a composite structure. 

\printbibliography

\end{document}